\newcommand{\tran}{^{\mathstrut\scriptscriptstyle\top}}
\newcommand{\subalign}[1]{%
  \vcenter{%
    \Let@ \restore@math@cr \default@tag
    \baselineskip\fontdimen10 \scriptfont\tw@
    \advance\baselineskip\fontdimen12 \scriptfont\tw@
    \lineskip\thr@@\fontdimen8 \scriptfont\thr@@
    \lineskiplimit\lineskip
    \ialign{\hfil$\m@th\scriptstyle##$&$\m@th\scriptstyle{}##$\crcr
      #1\crcr
    }%
  }
}
\newcommand{\R}{\mathbb{R}}				
\newcommand{\N}{\mathbb{N}}				
\newcommand{\E}{\mathbf{E}}
\newlength\figureheight
\newlength\figurewidth
\newlength\ImageToCaption
\newcommand{\diag}[1]{\operatorname{diag}\left(#1\right)}
\newcommand{\setpoint}[1]{\ifthenelse{\equal{#1}{}}{\ensuremath{u(k)}}{\ensuremath{u_{\mathrm{#1}}(k)}}}
\newcommand{\switchState}[1]{\ifthenelse{\equal{#1}{}}{\ensuremath{\delta(k)}}{\ensuremath{\delta_{\mathrm{#1}}(k)}}}
\newcommand{\disturbance}[1]{\ifthenelse{\equal{#1}{}}{\ensuremath{w(k)}}{\ensuremath{w_{\mathrm{#1}}(k)}}}
\newcommand{\state}[1]{\ensuremath{x(k)}}
\newcommand{\power}[1]{\ifthenelse{\equal{#1}{}}{\ensuremath{p(k)}}{\ensuremath{p_{\mathrm{#1}}(k)}}}
\newcommand{\powerFlow}[1]{\ifthenelse{\equal{#1}{}}{\ensuremath{p_{\mathrm{e}}(k)}}{\ensuremath{p_{\mathrm{e},#1}(k)}}}
\newcommand{\powerSharingFactor}[1]{\ifthenelse{\equal{#1}{}}{\ensuremath{\chi}}{\ensuremath{\chi_{\mathrm{#1}}}}}
\newcommand{\smallM}[1]{\ensuremath{\mathrm{m}_\mathrm{#1}}}
\newcommand{\bigM}[1]{\ensuremath{\mathrm{M}_\mathrm{#1}}}
\newcommand{\One}{1}
\newcommand{\Zero}{0}
\newcommand{\Eye}{I}
\newcommand{\samplingTime}{\ensuremath{\mathrm{T}_{\mathrm{s}}}}
\newcommand{\frequency}{\ensuremath{\rho}}
\newcommand{\predictedTime}[1]{\ifthenelse{\equal{#1}{}}{\ensuremath{k|k}}{\ensuremath{k+#1|k}}}
\let\min\relax
\let\max\relax
\DeclareMathOperator*{\min}{{min}}
\DeclareMathOperator*{\max}{{max}}
\newcommand{\Max}{{\mathrm{max}}}
\newcommand{\Min}{{\mathrm{min}}}
\DeclareMathOperator*{\Minimize}{\mathbf{Minimise}}
\DeclareMathOperator{\subjectto}{\mathbf{subj.\ to}}
\DeclareMathOperator{\nodes}{\mathbf{nodes}}
\DeclareMathOperator{\stage}{\mathbf{stage}}
\DeclareMathOperator{\child}{\mathbf{child}}
\DeclareMathOperator{\ancestor}{\mathbf{anc}}
\newcommand{\AVaR}[0]{%
  \mathrm{AV@R}_{\alpha}%
}
\theoremstyle{definition}
\newtheorem{problem}{Problem}
\newtheorem{theorem}{Theorem}[section]
\newtheorem{lemma}[theorem]{Lemma}
\newtheorem{remark}[theorem]{Remark}
\newtheorem{assumption}[theorem]{Assumption}
\newtheorem{proposition}[theorem]{Proposition}
\newcolumntype{d}[1]{D{.}{.}{#1}}
\newcommand{\noDecimal}[1]{\multicolumn{1}{c}{\centering #1}}
\acrodef{mg}[MG]{microgrid}
\acrodef{mpc}[MPC]{model predictive control}
\acrodef{res}[RES]{renewable energy sources}
\acrodef{ac}[AC]{alternating current} \acused{ac}
\acrodef{dc}[DC]{direct current} \acused{dc}
\acrodef{arima}[ARIMA]{autoregressive integrated moving average}
\acrodef{help}[HELP]{high-effect low-probability}
\acrodef{avar}[AVaR]{average value-at-risk}
\acrodef{miqcqp}[MIQCQP]{mixed-integer quadratically-constrained quadratic problem}
\begin{document}

\title{Risk-Averse Model Predictive Operation Control\\ of Islanded Microgrids}

\author{Christian A. Hans, \IEEEmembership{Student Member, IEEE}, Pantelis Sopasakis, Jörg Raisch, \\
Carsten Reincke-Collon and Panagiotis Patrinos, \IEEEmembership{Member, IEEE}
\thanks{C.\,A. Hans is with Technische Universit\"at Berlin, Control Systems Group, Germany
	(e-mail: hans@control.tu-berlin.de).}%
\thanks{P. Sopasakis is with Queen's University Belfast, School of Electronics, Electrical Engineering and Computer Science, Centre For Intelligent Autonomous Manufacturing Systems (i-AMS), Belfast, Northern Ireland, UK
	(e-mail: p.sopasakis@qub.ac.uk).}%
\thanks{J. Raisch is with Technische Universit\"at Berlin, Control Systems Group and Max-Planck-Institut f\"ur Dynamik komplexer technischer Systeme, Germany
	(e-mail: raisch@control.tu-berlin.de).}%
\thanks{C.~Reincke-Collon is with Aggreko plc, Germany (e-mail: carsten.reincke-collon@aggreko.com).}%
\thanks{P. Patrinos is with KU Leuven, Department of Electrical Engineering (ESAT), Belgium
	(e-mail: panos.patrinos@kuleuven.be).}%
\thanks{This work was partially supported by the German Federal Ministry for Economic Affairs and Energy (BMWi), projects No. 0324024A and 0325713A;
FWO projects No. G086318N and G086518N;
Fonds de la Recherche Scientifique--FNRS, Fonds Wetenschappelijk Onderzoek--Vlaanderen under EOS project No. 30468160 (SeLMA);
Research Council KU Leuven C1 project No. C14/18/068.}%
}

\maketitle

\begin{abstract}
	In this paper we present a risk-averse model predictive control (MPC) scheme for the operation of islanded microgrids with very high share of renewable energy sources.
	The proposed scheme mitigates the effect of errors in the determination of the probability distribution of renewable infeed and load.
	This allows to use less complex and less accurate forecasting methods and to formulate low-dimensional scenario-based optimisation problems which are suitable for control applications.
	Additionally, the designer may trade performance for safety by interpolating between the conventional stochastic and worst-case MPC formulations.
	The presented risk-averse MPC problem is formulated as a mixed-integer quadratically-constrained quadratic problem and its favourable characteristics are demonstrated in a case study.
	This includes a sensitivity analysis that illustrates the robustness to load and renewable power prediction errors.
\end{abstract}



\section{Introduction}
\label{sec:introduction}
The substitution of conventional power plants by \ac{res} is a key element in the fight against climate change.
However, it presents major challenges.
The structure of power supply is expected to change from a small number of large-scale power plants to a large number of small-scale units.
These will be geographically distributed over the entire electric grid.
Additionally, the uncertainty in power supply of some \ac{res} will complicate the operation of the grid.

One way to tackle these challenges is to partition the electric grid into \acp{mg} \cite{HAIM2007}.
These comprise storage, conventional and renewable units connected to each other and to loads by transmission lines \cite{Las2002}.
\acp{mg} can be operated connected to the grid or electrically isolated (islanded) \cite{LMM2006}.
Inspired by conventional power systems, hierarchical approaches have been promoted for the control of \acp{mg}, e.g., in \cite{BD2012}.
On the lower control layer, typically on a timescale from milliseconds to seconds, primary control aims to provide voltage and frequency stability.
Secondary control, typically on a timescale from seconds to minutes, aims at compensating frequency deviations and ensures that the voltages remain close to the desired values.
Operation control, also referred to as energy management, typically acts on a timescale of minutes to fractions of hours.
It aims at optimising the \ac{mg} operation by providing power setpoints to the units \cite{KIHD2008}.
For this task, \ac{mpc} approaches are considered a good choice as they allow to explicitly include constraints on the units and take into account the system dynamics.
Moreover, they can be combined with forecasts of load and renewable infeed to operate the \ac{mg} in an optimal way.

\subsection{Operation control of \acp{mg}}
%
%
Several approaches for the operation control of \acp{mg} have been proposed.
One way to categorise them is by the way they handle uncertainties.
Prominent formulations are
(i)~{certainty-equivalent}, where a deterministic system model is fully trusted,
(ii)~{worst-case}, where no probability distribution is considered,
(iii)~{risk-neutral stochastic}, where an underlying probability distribution is trusted, and
(iv)~{risk-averse}, where the underlying distribution is not fully trusted.

%
%
There is a variety of publications on operation control of \acp{mg} where a perfect forecast is assumed.
For example, in \cite{TH2011} a certainty-equivalent approach for grid-connected \acp{mg} is proposed.
Based on the assumption that the forecast generation of \ac{res}, load and the energy price are certain, in \cite{PRG2014} an \ac{mpc} is presented.
For islanded \acp{mg}, an \ac{mpc} approach that also assumes perfect forecasts is given in \cite{PBL+2013}.
A certainty-equivalent approach that includes power flow over the lines is proposed in \cite{OCK2014}.
However, as shown in \cite{HNRR2014}, in the operation of islanded \acp{mg} with high share of \ac{res}, certainty-equivalent approaches can lead to significant constraint violations.

%
%
To compensate for this lack of robustness, some authors have proposed worst-case \ac{mpc} approaches for the operation of \acp{mg}.
Uncertain parameters, such as model parameters \cite{Prodan2014399} and forecasts \cite{HNRR2014,HNRR2015}
are treated as bounded quantities and an \ac{mpc} formulation is used to minimise the worst-case cost.
The \ac{mpc} approaches for islanded \acp{mg} in \cite{HNRR2014,HNRR2015} include power flow over the lines, power sharing of grid-forming units and curtailable renewable infeed.
However, these approaches have been found to be overly conservative as they try to minimise the worst-case objective \cite{HNRR2014,HNRR2015}.

%
%
The conservativeness of worst-case approaches has led to the wide adoption of stochastic methods, which consist in minimising the expected value of a random cost with respect to an assumed probability distribution.
There are two main approaches to modelling randomness:
(i)~random processes with continuous distributions and
(ii)~scenario trees.

%
%
In the area of microgrid control, some authors have used the assumption that all involved uncertain quantities are normally distributed~\cite{KouLiang+2016,Gulin+2015,Cominesi+2018,HHM+2012}.
Having unbounded support, Gaussian disturbances make it impossible to satisfy state constraints uniformly;
instead chance constraints can be used.
Dealing with chance constraints hinges on the normality assumption and requires that the involved random processes be independent \cite{MesbahReview2016,OJM2008}.
An approach for chance constraints that drops the normality assumption using polynomial chaos expansions and machine learning is presented in~\cite{KouLiangGao2018}.%

%
%
Scenario-based approaches seem to be more
popular in microgrid control \cite{PRG2014,HHM+2012,HSB+2015,Petrollese201696,PRG2016,MD2017,HBSJ2016}; mainly because scenario
trees can be constructed from data \cite{Pinson+2009,HR2003,HeiRom09}
and the assumption of independence does not need to be imposed.
In \cite{Petrollese201696}, a scenario-based approach combining an optimal operation scheduling with an \ac{mpc} and assuming uncertain weather and load was proposed.
Furthermore, \cite{PRG2014} was extended in \cite{PRG2016} to a two-stage stochastic \ac{mpc} approach, which was formulated using scenario trees.
In \cite{MD2017}, a scenario-based optimal operation control strategy for droop controlled \acp{mg} is presented, where a heuristic particle swarm optimisation is used to minimise the expected objective value while accounting for power limitations of the transmission lines.
A stochastic continuous-time rolling horizon control strategy that considers uncertain load can be found in \cite{HBSJ2016}.
However, this approach disregards the power flow over the transmission lines, possible power sharing among grid-forming units and the possibility to limit the power provided by \ac{res}.
There exist other scenario-based approaches, e.g., \cite{HHM+2012}, that account for power flow.
However, \cite{HHM+2012} does not allow to limit infeed from \ac{res}.
One approach that includes power flow, curtailable \ac{res} and power sharing is the scenario-based stochastic \ac{mpc} presented in \cite{HSB+2015}.

Scenario-based \ac{mpc} approaches that minimise the expectation of the cost assume that the scenario tree offers an adequate representation of the underlying probability distribution.
The realisation that this assumption does not always hold led to the emergence of risk-averse MPC \cite{HSBP2017,CP2014} which aims at safeguarding the controlled system from the effects of inexact knowledge of the underlying probability distribution.

\subsection{Risk-based approaches in power systems}

Risk measures stem from the domains of operations research, stochastic finance and actuarial science~\cite{SDR2014}.
An example of a popular risk measure is the \ac{avar} \cite{SDR2014}.
When used in optimisation problems, risk measures allow to mitigate the effects of inexact knowledge of the underlying probability distributions.
They bridge the gap between conservative worst-case approaches, which assume no knowledge about the underlying probability distribution and stochastic approaches, which assume perfect knowledge about it.
Therefore, risk-averse approaches are suitable for practical implementations where probability distributions are not known exactly.
In the case of operation management of \acp{mg} it is likely that the predicted load and renewable infeed are uncertain and inaccurately estimated probability distribution.
Such errors in the probability distribution can have a high impact on the controlled system.
Therefore, it is important to design controllers that are robust with respect to forecast errors and errors in the determination of their probability distribution.

Although risk-averse problems enjoy favourable properties, their applicability has been limited by their complexity and computational cost of resulting multistage risk-averse optimal control problems.
The reason is that their cost function is a composition of several nonsmooth mappings \cite{SDR2014}.
To date, there have only been slow numerical optimisation methods (e.g., stochastic dual dynamic programming) and limited to linear cost functionals \cite{Asamov2015,Bruno2016979}.
An alternative solution approach uses multiparametric piecewise quadratic programming \cite{patrinos2011convex}, yet its applicability is limited to systems with few states and small prediction horizons \cite{patrinosECC2007}.
This has been daunting and risk-averse problems involving integer variables are considered overly computationally complex for real-time applications.

The study of risk-averse optimisation problems is gaining popularity in power systems applications \cite{KS2016} such as unit commitment \cite{ZJ2018}, scheduling \cite{GPM2007,MNF+2013} and optimal power flow \cite{ZSM2017,SWML2015,Roald201566}.
In \cite{Maceira2015126}, using \ac{avar}, an approach for expansion and operation planing for a hydrothermal system is proposed.
A multi-stage formulation for short-term trading with uncertain power from wind turbines and market prices is introduced in \cite{MCP2010}.
For grid-connected \acp{mg} a comparison between a scenario-based risk-averse stochastic and a worst-case optimisation approach is presented in \cite{KS2016}.

\begin{figure*}[t]
	\centering
	\includegraphics{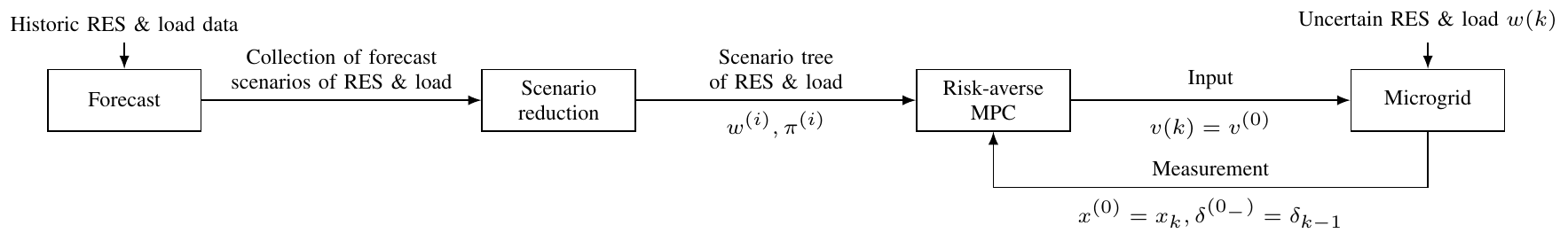}%
	\caption{Control scheme used in the risk-averse \acs{mpc} approach for islanded \acsp{mg} (motivated by \cite{HSB+2015,PTB2011}).}
	\label{fig:systemOverview}
\end{figure*}

Despite their increasing popularity, most risk-averse approaches \cite{ZJ2018,MNF+2013,Maceira2015126,BZW+2012} are based on simple formulations which fail to capture how risk propagates in time.
Proper multistage formulations lead to intricate optimisation problems where the cost function is expressed as a composition of nonsmooth mappings.
This makes them less suitable for \ac{mpc}~\cite{HSBP2017}, where fast computations are required.
Therefore, in this paper we use a reformulation that decomposes these nested mappings and allows to solve risk-averse \ac{mpc} problems online.

\subsection{Contributions}
In brief, the contributions of this work are:
(i)~we introduce a novel constrained hybrid dynamical model for islanded \acp{mg},
(ii)~we propose a multi-stage risk-averse \ac{mpc} problem for the optimal operation of an \ac{mg},
(iii)~we reformulate the risk-averse optimal control problem in a computationally tractable way and
(iv)~we demonstrate the properties of the proposed operation control scheme in a simulation case study.

(i)~We present the model of an islanded \ac{mg} with uncertain renewable generation and loads that allows for configurations with very high share of \ac{res}.
This model, motivated by \cite{HNRR2014,HSB+2015}, considers a possible limitation of renewable infeed while limitations on transmission lines are approximately accounted for using \ac{dc} power flow approximations.
We model storage devices as grid-forming units and consider power sharing with the enabled conventional generators.
This way, the fluctuations of load and renewable generation affect the power of all units and the state of charge of the storage devices.
This allows to model operating modes where only \ac{res} and storage units are used and no conventional unit is required.

(ii)~We extend \cite{HNRR2014,HSB+2015} to formulate a risk-averse \ac{mpc} problem for islanded \acp{mg}.
Unlike risk-neutral stochastic \ac{mpc} \cite{HSB+2015,PTB2011}, where a large number of scenarios is needed
for an accurate representation of the distributions, risk-averse \ac{mpc} allows for fewer scenarios
as it mitigates the effect of uncertainty in the estimated probability distribution.
This allows for robustness against bad forecast models of load and renewable infeed, time-varying probability distributions, or approximation errors in the scenario tree generation.
Risk-averse formulations allow to interpolate between worst-case~\cite{HNRR2014} and stochastic~\cite{HSB+2015} \ac{mpc} to specify the acceptable level of risk and provide resilience against \acl{help} events.

(iii)~Motivated by \cite{HSBP2017}, we use an epigraphical relaxation to reformulate the original risk-averse problem as a \ac{miqcqp}.
This way we decompose the original nested formulation and render the problem formulation suitable for real-time applications.

(iv)~In a comprehensive case study, we demonstrate the use of the proposed risk-averse \ac{mpc} scheme for a simple \ac{mg}.
We juxtapose the operation of the \ac{mg} using a stochastic, a worst-case and a risk-averse formulation to show that the conservativeness of the controller can be tuned.
Lastly, the robustness with respect to uncertainties in the probability distribution of load and \ac{res} is investigated by means of a sensitivity analysis.

\subsection{Structure of paper}
\label{sec:structure}
The remainder of the paper is structured along the lines of \Cref{fig:systemOverview}.
In \Cref{sec:microgridModel} the model of an islanded \ac{mg} is introduced.
Then, scenario trees are derived from time-series based forecasts in \Cref{sec:forecastAndScenarioTree}.
In \Cref{sec:operatingCosts} we quantify the operating costs of the \ac{mg}.
Subsequently, risk measures are discussed in \Cref{sec:measuringRisk} and a risk-averse \ac{mpc} approach is derived in \Cref{sec:riskAverseMpc}.
Finally, in \Cref{sec:caseStudy}, the properties of the resulting \ac{mpc} are illustrated in a numerical case study.

\subsection{Notation}
Real and natural numbers are denoted by $\R$ and $\N$, respectively.
The set of nonnegative integers is denoted by $\N_{0}$.
The set $\{x| x\in\N_0 \wedge a \leq x \leq b\}$ is denoted by $\N_{[a,b]}$.
Furthermore, the set of nonnegative real numbers is $\R_{\geq 0}$ and the set of positive real numbers is $\R_{> 0}$.
The cardinality of a set $\mathbb{V}$ is denoted by $|\mathbb{V}|$.
The transpose of a vector $a$ is $a\tran$.
The vector $[a_{v_1} ~ a_{v_2} ~ \cdots ~ a_{v_N}]\tran$ composed of elements $a_{v_i}$ for all $v_i\in\mathbb{V} = \{v_1, v_2, \ldots, v_N\}\subset\N$ with $v_i < v_j$ for $i < j$, $i \in \N_{[1,N]}$, $j \in \N_{[1,N]}$ is denoted by $[a_i]_{i\in\mathbb{V}}$.
A vector of dimension $N$ whose elements are all equal to $1$ is denoted by $\One_N$.
Let $a = [a_1 ~\cdots ~ a_N]\tran$.
Then, $\diag{a}$ denotes the diagonal matrix with entries $a_i$, $i \in \N_{[1,N]}$.

When used with vectors, the relations $\geq$, $\leq$, $<$, $>$ are understood element-wise.
Function $\max(a, b)$ returns the element-wise maximum of the vectors $a$ and $b$.
However, if the function is used with only one vector input argument, i.e., $\max(a)$, then it returns the largest element of the vector $a$.
The same holds for the minimum function $\min(\cdot)$.
We denote by $\min_{x\in\mathbb{X}} f(x)$ the minimum value of function $f$ over $\mathbb{X}$ and by $\max_{x\in\mathbb{X}} f(x)$ the maximum value of $f$ over $\mathbb{X}$.


\section{Microgrid model}
\label{sec:microgridModel}

In this section, we derive a control-oriented mathematical model of an islanded \ac{mg}.
The model includes loads, conventional, renewable and storage units as well as transmission lines.
The example of a basic \ac{mg} that includes all these components is shown in \cref{fig:examplaryMicrogrid}.
The \ac{mg} model has the form
\begin{subequations}\label{eq:overall-model}
\begin{align}
	x(k) + {B} q(k) - x(k+1) &= 0, \label{eq:overall-model-dynamics}\\
	{H}_1 \cdot x(k+1) & \leq h_1, \label{eq:overall-model-dynamics-inequalities} \\
	{H}_2 \cdot \begin{bmatrix} v(k)\tran & q(k)\tran & w(k)\tran \end{bmatrix}\tran &\leq h_2,\label{eq:overall-model-inequalities} \\
	{G} \cdot \begin{bmatrix} v(k)\tran & q(k)\tran & w(k)\tran \end{bmatrix}\tran &= g,\label{eq:overall-model-algebraic}
\end{align}
\end{subequations}
where $k\in\N_{0}$ is the discrete time instant.
Here, $x(k)\in\R_{\geq 0}^S$ with $S\in\N$ is the state vector and $q(k)\in\R^Q$ is a vector of $Q\in\N$ auxiliary variables.
The control inputs are collected in the vector $v(k) = [\setpoint{}\tran ~ \switchState{t}\tran]\tran$, where
$\setpoint{} \in \R^U$ is the vector of real-valued control inputs of all $U \in \N$ units and
$\switchState{t} \in \{0, 1\}^T$ the vector of $T\in\N$ Boolean inputs.
The uncertain external inputs of the model are collected in the vector $w(k)\in \R^W$, $W \in \N$.
In \eqref{eq:overall-model-dynamics}, $B\in \R^{S \times Q}$ and in \eqref{eq:overall-model-dynamics-inequalities}, ${H}_1$, $h_1$ are of appropriate dimensions.
Furthermore, in \eqref{eq:overall-model-inequalities}, $H_2$ is a matrix and $h_2$ a vector of appropriate dimensions that reflect inequality constraints.
Likewise, in \eqref{eq:overall-model-algebraic} $G$ is a matrix and $g$ a vector of appropriate dimensions that reflect equality constraints.

In what follows, we will derive a control-oriented \ac{mg} model of the form \eqref{eq:overall-model}.
We start by posing some assumptions.

\begin{assumption}[Lower control layers]\label[assumption]{ass:microgridModel:lowerControlLayers}
The lower control layers, i.e., primary and secondary control, are designed such that the \ac{mg} can run autonomously for several minutes, so providing power setpoints to the units on the same timescale is sufficient.
These control layers ensure a desired power sharing (see, e.g., \cite{KHS+2017,SHK+2017}) among the grid-forming units.
\end{assumption}

\begin{assumption}[Conventional units]\label[assumption]{ass:microgridModel:conventionalUnits}
The startup and shutdown times of the conventional units are small compared to the sampling time of \ac{mpc}, i.e., switching actions are assumed to be instantaneous.
Changes in power are instantaneous, i.e., no climb rates need to be considered.
\end{assumption}

\begin{assumption}[Storage units]\label[assumption]{ass:microgridModel:storageUnits}
The state of charge can be estimated sufficiently accurately and is accessible to the operation control.
The error introduced by neglecting self discharge and conversion losses of storage units is small compared to the uncertainty introduced by renewable infeed and loads.
\end{assumption}

\begin{assumption}[Transmission lines]\label[assumption]{ass:microgridModel:transmissionLines}
The resistance of the transmission lines between the units and loads of the \ac{mg} as well as the reactive power flow are negligible.
The voltage amplitudes in the grid are constant and the phase angle differences small,
so the \ac{dc} power flow approximations \cite{PMVDB2005} can be used.
The error introduced hereby is small compared to the uncertainty introduced by renewable infeed and loads.
\end{assumption}

\subsection{Plant model interface}
The real-valued manipulated variables are the power setpoints of the units $\setpoint{} = [\setpoint{t}\tran ~ \setpoint{s}\tran ~ \setpoint{r}\tran ]\tran \in\R^U$ where $\setpoint{t}\in\R^T_{\geq0}$ are the setpoints of the $T$ conventional units, $\setpoint{s}\in\R^S$ the setpoints of the $S$ storage units and $\setpoint{r}\in\R^R_{\geq0}$ the setpoints of the $R$ \ac{res}.
Every conventional unit is associated with a Boolean input that indicates whether it is enabled or disabled.
All Boolean inputs are collected in a vector $\switchState{t}\in\{0, 1\}^T$.
Furthermore, the stored energies of the storage units are collected in the state vector $\state{}\in\R^{S}_{\geq 0}$.
The uncertain external inputs of the model are $\disturbance{} = [\disturbance{r}\tran ~ \disturbance{d}\tran]\tran$, where $\disturbance{r}\in\R^R_{\geq0}$ represents the maximum available power of the renewable units under given weather conditions and $\disturbance{d}\in\R^D_{\geq0}$ the load.

\subsection{Power of units}
In islanded mode, equilibrium of production, consumption and storage power must be ensured in presence of uncertain load and renewable infeed.
Therefore, the power of the units $\power{}\in\R^U$ is not necessarily equal to the setpoints $\setpoint{}$.
The vector of power values $\power{} = [\power{t}\tran ~ \power{s}\tran ~ \power{r}\tran]\tran$ is composed of the power of conventional units, $\power{t}\in\R^T_{\geq 0}$,
storage units, $\power{s}\in\R^S$, and
\ac{res}, $\power{r}\in\R^R_{\geq 0}$.

\begin{table}[t]
\centering
\caption{Model-specific variables}
\begin{tabular}{clcc}
  \toprule
  Symbol & Explanation & Unit & Size \\
  \midrule
  $x$ & Energy of storage units (state) & $\unit{pu\,h}$ & $S$\\
  \midrule
  $u_{\mathrm{t}}$ & Setpoint inputs of conventional units & $\unit{pu}$ & $T$ \\
  $u_{\mathrm{s}}$ & Setpoint inputs of storage units & $\unit{pu}$ & $S$\\
  $u_{\mathrm{r}}$ & Setpoint inputs of renewable units & $\unit{pu}$ & $R$ \\
  $u$ & Setpoint inputs of all units & $\unit{pu}$ & $U$ \\
  $\delta_{\mathrm{t}}$ & Boolean inputs of conventional units & --- & $T$ \\
  $v$ & Vector of all control inputs & --- & $Q$ \\
  \midrule
  $w_{\mathrm{r}}$ & Uncertain available renewable power & $\unit{pu}$ & $R$\\
  $w_{\mathrm{d}}$ & Uncertain load & $\unit{pu}$ & $D$\\
  $w$ & Vector of all uncertain inputs & $\unit{pu}$ & $W$ \\
  \midrule
  $p_{\mathrm{t}}$ & Power of conventional units & $\unit{pu}$ & $T$ \\
  $p_{\mathrm{s}}$ & Power of storage units & $\unit{pu}$ & $S$ \\
  $p_{\mathrm{r}}$ & Power of renewable units & $\unit{pu}$ & $R$ \\
  $p$ & Power of all units & $\unit{pu}$ & $U$ \\
  $p_{\mathrm{e}}$ & Power over transmission lines & $\unit{pu}$ & $E$ \\
  $\delta_{\mathrm{r}}$ & Boolean auxiliary variables & --- & $R$ \\
  $\rho$ & Real-valued auxiliary variable  & --- &$1$ \\
  $q$ & Vector of all auxiliary variables & --- & $Q$ \\
  \bottomrule
\end{tabular}
\end{table}

\subsubsection{\ac{res} units}
The power provided by the renewable units, $p_{\mathrm{r}}(k)$, and the corresponding setpoints, $u_{\mathrm{r}}(k)$, are limited by
\begin{subequations}\label{eq:microgridModel:renewablePower:limits}
\begin{alignat}{2}
	p_{\mathrm{r}}^{\min} & \leq \power{r}		&& \leq p_{\mathrm{r}}^{\max} \text{ and } \\
	p_{\mathrm{r}}^{\min} & \leq \setpoint{r} 	&& \leq p_{\mathrm{r}}^{\max}
\end{alignat}
with $p_{\mathrm{r}}^{\min}\in\R^R_{\geq 0}$ and $p_{\mathrm{r}}^{\max}\in\R^R_{\geq 0}$.
\end{subequations}

Additionally, the power infeed $p_{\mathrm{r},i}(k)\in\R_{\geq 0}$ of every renewable unit $i\in\N_{[1, R]}$ can be limited by the power setpoint $u_{\mathrm{r},i}(k)\in\R_{\geq 0}$.
However, the power only follows the setpoint if the maximum possible infeed under current weather conditions $w_{\mathrm{r},i}(k)\in\R_{\geq 0}$ is greater than or equal $u_{\mathrm{r},i}(k)$.
Using the element-wise $\min$ operator, this can be described by
\begin{equation} \label{eq:microgridModel:res:max}
	\power{r} = \min (\setpoint{r}, \disturbance{r}).
\end{equation}

\begin{subequations} \label{eq:microgridModel:renewablePower:bigM}%
For the formulation of the optimisation problem it is beneficial to transform \eqref{eq:microgridModel:res:max} into a set of linear inequalities involving integer variables.
This is done by introducing the free variable $\switchState{r}\in\{0,1\}^R$.
With the constants $\smallM{r}\in\R$, $\smallM{r} < \min(p_{\mathrm{r}}^{\min})$ and $\bigM{r}\in\R_{\geq 0}$, $\bigM{r} > \max(p_{\mathrm{r}}^{\max})$ which are calculated offline, we then exactly reformulate \eqref{eq:microgridModel:res:max} as (see, e.g., \cite{BM1999})%
\begin{align}%
	\power{r} &\leq \setpoint{r}, \\
	\power{r} &\geq \setpoint{r} + (\diag{\disturbance{r}} - \bigM{r} \Eye_{R}) \switchState{r}, \\
	\power{r} &\leq \disturbance{r}, \\
	\power{r} &\geq \disturbance{r} - (\diag{\disturbance{r}} - \smallM{r} \Eye_{R}) (\One_{R} - \switchState{r}).
\end{align}%
\end{subequations}%

\subsubsection{Conventional units}
The power provided by conventional unit $i\in\N_{[1, T]}$ is limited by $p_{\mathrm{t},i}^{\min}\in\R_{\geq0}$ and $p_{\mathrm{t},i}^{\max}\in\R_{\geq 0}$ if it is enabled, i.e., if $\delta_{\mathrm{t},i}(k) = 1$.
If the unit is disabled, i.e., $\delta_{\mathrm{t},i}(k) = 0$, then naturally $p_{\mathrm{t},i}(k)=0$.
In vector notation and with $p_{\mathrm{t}}^{\min}\in\R^T_{\geq 0}$, $p_{\mathrm{t}}^{\max}\in\R^T_{\geq 0}$ this can be expressed by
\begin{subequations}\label{eq:microgridModel:thermalUnit:limits}%
\begin{equation}
	\diag{p_\mathrm{t}^{\min}} \switchState{t} \leq \power{t} \leq \diag{p_\mathrm{t}^{\max}} \switchState{t}.
\end{equation}
The same holds for the power setpoints, i.e.,
\begin{equation}
	\diag{p_\mathrm{t}^{\min}} \switchState{t} \leq \setpoint{t} \leq \diag{p_\mathrm{t}^{\max}} \switchState{t}.
\end{equation}%
\end{subequations}%

\subsubsection{Storage units}
\begin{subequations}\label{eq:microgridModel:storageUnit:powerlimits}
As the storage units are assumed to be always enabled, all their setpoints and power values are limited by $p_{\mathrm{s}}^{\min}\in\R^S_{\leq 0}$ and $p_{\mathrm{s}}^{\max}\in\R^S_{\geq 0}$, i.e.,
\begin{alignat}{2}
p_{\mathrm{s}}^{\min} & \leq \power{s} && \leq p_{\mathrm{s}}^{\max},\\
 p_{\mathrm{s}}^{\min} & \leq \setpoint{s} && \leq p_{\mathrm{s}}^{\max}.
\end{alignat}%
\end{subequations}%

\subsection{Power sharing of grid-forming units}
Due to variations of load and renewable infeed, the power of all units does not necessarily match the power setpoints that are prescribed to the system.
The grid-forming units, i.e., all storage and conventional units, are assumed to be controlled by the lower control layers such that they share the changes in load and renewable infeed in a desired proportional manner.
This so called proportional power sharing (see, e.g., \cite{KHS+2017,SHK+2017}) depends on the design parameter $\chi_{i}\in\R_{> 0}$ for all grid-forming units.
A typical choice of $\chi_i$ is, e.g., proportional to the nominal power of the corresponding units.

Power sharing can be formalised as follows.
Two units ${i\in\N_{[1, T+S]}}$ and $j\in\N_{[1, T+S]}$, $i \neq j$ with $\chi_{i}\in\R_{>0}$ and $\chi_{j}\in\R_{>0}$ are said to share their power proportionally, if
\begin{equation}\label{eq:powerSharing:baseNotation}
	\frac{p_{i}(k) - u_{i}(k)}{\chi_i}
		=
	\frac{p_{j}(k) - u_{j}(k)}{\chi_j}
\end{equation}
holds.
Using the auxiliary free variable $\frequency(k)\in\R$ and including that only enabled units, i.e., units $i$ with $\delta_{\mathrm{t},i}(k) = 1$, can participate in power sharing, we can rewrite \eqref{eq:powerSharing:baseNotation} for all grid-forming units with
${K}_\mathrm{t} = \operatorname{diag}{([\nicefrac{1}{\chi_{1}} ~ \cdots ~ \nicefrac{1}{\chi_{T}}]\tran)}$ and
${{K}_\mathrm{s} = \operatorname{diag}([\nicefrac{1}{\chi_{(T+1)}} ~ \cdots ~ \nicefrac{1}{\chi_{(T+S)}}]\tran)}$ as
\begin{subequations}\label{eq:microgridModel:powerSharing}
\begin{align}
	\label{eq:microgridModel:powerSharing:thermal}
	{K}_\mathrm{t} (\power{t} - \setpoint{t}) &= \frequency(k) \switchState{t} \text{ and }\\
	\label{eq:microgridModel:powerSharing:storage}
	{K}_\mathrm{s} (\power{s} - \setpoint{s}) &= \frequency(k)  \One_{S}.
\end{align}
\end{subequations}
To proceed with what follows in the next sections, we need to transform \eqref{eq:microgridModel:powerSharing:thermal} into a set of linear inequalities with integer variables.
This can be done using a similar strategy as described in \cite{BM1999}.
First, we choose $\bigM{t}\in\R$ which can be calculated offline.
The value of $\bigM{t}$ should be greater than the biggest possible value of $\frequency(k)$.
Hence, with the biggest possible value
for the storage units, $\frequency_\mathrm{s}^{\max} = \max({K}_\mathrm{s} (p_{\mathrm{s}}^{\max} - p_{\mathrm{s}}^{\min}))$, and
for the conventional units, $\frequency_\mathrm{t}^{\max} = \max({K}_\mathrm{t} (p_{\mathrm{t}}^{\max} - p_{\mathrm{t}}^{\min}))$,
$\bigM{t}$ has to be chosen such that $\max{} \big( \frequency_\mathrm{s}^{\max} , \frequency_\mathrm{t}^{\max} \big) < \bigM{t}$.
Then, with $\smallM{t} = - \bigM{t}$, we can exactly reformulate \eqref{eq:microgridModel:powerSharing:thermal} as
\begin{subequations} \label{eq:microgridModel:powerSharing:thermal:bigM}
\begin{align}
	K_{\mathrm{t}} (\power{t} - \setpoint{t}) & \leq \bigM{t} \switchState{t}, \\
	K_{\mathrm{t}} (\power{t} - \setpoint{t}) & \geq \smallM{t} \switchState{t}, \\
	K_{\mathrm{t}} (\power{t} - \setpoint{t}) & \leq \One_{T} \frequency(k) - \smallM{t} (\One_T - \switchState{t}),\\
	K_{\mathrm{t}} (\power{t} - \setpoint{t}) & \geq \One_{T} \frequency(k) - \bigM{t} (\One_T - \switchState{t}).
\end{align}
\end{subequations}%

\subsection{Dynamics of storage units}
The dynamics of all storage units are assumed as
\begin{subequations}%
\label{eq:microgridModel:energy}%
\begin{equation} \label{eq:microgridModel:energy:dynamics}
	x(k+1) = x(k) -\samplingTime \power{s},
\end{equation}
where $\samplingTime\in\R_{>0}$ is the sampling time.
The stored energy is represented by $x(k)$ with initial state $x(0) = x_0$.
To cover for the limited storage capacity, $x(k+1)$ is bounded by
\begin{equation}
	x^{\min} \leq x(k+1) \leq x^{\max},
\end{equation}%
\end{subequations}%
with $x^{\min} = \Zero_{S}$ and $x^{\max}\in\R^{S}_{\geq 0}$.

\begin{remark}\label[remark]{rem:mgStorageModel}%
	In the simulations in \Cref{sec:caseStudy}, we use a more detailed plant model that is different from \eqref{eq:microgridModel:energy:dynamics}.
	This storage model is motivated by \cite{PRG2014} and includes self-discharge as well as charging and discharging efficiencies.
	It reads
	\begin{equation}\label{eq:microgridModel:energy:storageLosses}
		x(k+1) =
		\begin{cases}
			x(k) -\samplingTime \eta^{\mathrm{c}} p_{\mathrm{s}}(k) - x^{\mathrm{sd}},
				& \text{if } p_{\mathrm{s}}(k) \leq 0, \\
			x(k) -\samplingTime (\eta^{\mathrm{d}})^{-1} p_{\mathrm{s}}(k) - x^{\mathrm{sd}},
				& \text{if } p_{\mathrm{s}}(k) > 0.
		\end{cases}
	\end{equation}
	Here, $\eta^{\mathrm{c}}\in(0, 1]^{S\times S}$ is the diagonal matrix of charging efficiencies of the units and $\eta^{\mathrm{d}}\in(0, 1]^{S\times S}$ the diagonal matrix of discharging efficiencies, both with nonzero diagonal elements.
	Furthermore, $x^{\mathrm{sd}}\in\R_{\geq 0}^S$ models self-discharge.%

	The simplified storage dynamics of the control-oriented model \eqref{eq:microgridModel:energy:dynamics} can be derived from \eqref{eq:microgridModel:energy:storageLosses} by assuming that $\eta^{\mathrm{c}} = \eta^{\mathrm{d}} = \diag{\One_S}$ and that $x^{\mathrm{sd}} = \Zero_S$.
	Note that \eqref{eq:microgridModel:energy:dynamics} is used in an \ac{mpc} context where the state is sampled at every time instant and used as initial value to predict future states.
	Such a strategy allows to use of less accurate models due to its inherent robustness \cite{rawlings2009model}.
	Furthermore, the error introduced by uncertain renewable infeed and load is assumed to be much larger than the one introduced by \eqref{eq:microgridModel:energy:dynamics} (see \Cref{ass:microgridModel:storageUnits}).
	Therefore, the simplification is of minor importance.
\end{remark}%

\subsection{Transmission network}
\begin{subequations}\label{eq:microgridModel:transmissionNetwork}
The power transmitted over the lines can be derived using \ac{dc} power flow approximations for \ac{ac} grids \cite{HNRR2014, PMVDB2005}.
Thus, the power flowing over the $E$ transmission lines, ${\powerFlow{} = [\powerFlow{1} ~ \cdots ~ \powerFlow{E}]\tran}$, can be derived from the power of the units and the load using the linear relation
\begin{equation}\label{eq:nodePowerToLinePower}{}
	\powerFlow{} = {F} \cdot \begin{bmatrix}\power{}\tran & \disturbance{d} \tran \end{bmatrix} \tran,
\end{equation}
where ${F}\in\R^{E\times(U+D)}$ is a matrix that links the power flowing over the lines with the power provided or consumed by the units and loads.
More information on the derivation of $F$ can be found, e.g., in \cite{HNRR2014,HBR+2018}.
Due to the limited transmission capability of the lines, $\powerFlow{}$ is desired to be bounded by
\begin{equation}\label{eq:network:powerLimit}
	p_{\mathrm{e}}^{\min} \leq \powerFlow{} \leq p_{\mathrm{e}}^{\max}
\end{equation}
with $p_{\mathrm{e}}^{\min}\in\R^E_{\leq 0}$ and $p_{\mathrm{e}}^{\max}\in\R^E_{\geq 0}$.
Additionally, the generated power must equal consumed power at all times, i.e.,
\begin{equation}\label{eq:network:powerEquilibrium}
	\One_{T}\tran \power{t} + \One_{S}\tran \power{s} + \One_{R}\tran \power{r} = \One_{D}\tran \disturbance{d}.
\end{equation}
\end{subequations}

\begin{remark}[]
	There are small \acp{mg} in which all units are directly connected to a single bus, e.g., in \cite{PRG2014,PRG2016}.
	Such \acp{mg} can be modelled in two ways with \eqref{eq:microgridModel:transmissionNetwork}.
	(i)~Discard constraints \eqref{eq:nodePowerToLinePower} and \eqref{eq:network:powerLimit} and only keep \eqref{eq:network:powerEquilibrium}.
	(ii)~Set $F = [\One_{U}\tran ~ -\One_{D}\tran]$.
	Due to \eqref{eq:network:powerEquilibrium}, in this case $p_{e}(k) = 0$ holds for all $k\in\N_0$ and the limits can be chosen as $p_{\mathrm{e}}^{\min} = p_{\mathrm{e}}^{\max} = 0$.%
\end{remark}%

\begin{remark}
	\label[remark]{rem:mgPowerFlowModel}
	For the plant simulation model in \Cref{sec:caseStudy}, we use the nonlinear power flow equations \cite{KBL1994,SHK+2017}.
	Thus, the power provided or consumed by unit or load $i\in\N_{[1, J]}$, where $J = U+D$ is the total number of units and loads, is
	\begin{equation}\label{eq:microgridModel:powerFlowEquations}
		p_{\mathrm{e},i}(k) =
		\hat{v}_i \textstyle\sum\limits_{\subalign{j&=1 \\ j&\neq i}}^{J} \hat{v}_i g_{ij} -
				\hat{v}_j \big(g_{ij} \cos(\theta_{ij}(k)) + b_{ij} \sin(\theta_{ij}(k))\big),
	\end{equation}
	Here, $\hat{v}_i\in \R_{>0}$ is the voltage amplitude at node $i$ and $\theta_{ij}(k) = \theta_i(k) - \theta_j(k)$ is the difference between the phase angles $\theta_i(k)\in\R$ at node $i$ and $\theta_j(k)\in\R$ at node $j\in\N_{[1, J]}$.
	Furthermore, $g_{ij}\in\R_{\geq 0}$ is the conductance and $b_{ij}\in\R$ the susceptance of the line connecting nodes $i$ and $j$.

	Note that \eqref{eq:nodePowerToLinePower} can be derived from \eqref{eq:microgridModel:powerFlowEquations} using \Cref{ass:microgridModel:transmissionLines}, i.e., assuming inductive lines with $g_{ij} = 0$, constant voltage amplitudes $\hat{v}_i$ and small angle differences $\theta_{ij}(k)$ such that $\sin(\theta_{ij}(k)) \approx \theta_{ij}(k)$ can be used for all $i,j\in\N_{[1, J]}$.
	With these assumptions and with $\tilde{b}_{ij} = - \hat{v}_i \hat{v}_j b_{ij}$, \eqref{eq:microgridModel:powerFlowEquations} becomes
	\begin{equation}
		p_{\mathrm{e},i}(k) =
		 \textstyle\sum\limits_{\subalign{j&=1 \\ j&\neq i}}^J
				\tilde{b}_{ij} \theta_{ij}(k).
	\end{equation}
	This equation can now be used to derive matrix $F$ in \eqref{eq:nodePowerToLinePower} (see, e.g., \cite{HNRR2015}).
	Although more accurate convex power flow models exist (see, e.g., \cite{Low2014}), we assume that the error introduced by uncertain renewable infeed and load is much larger than the one introduced by \eqref{eq:nodePowerToLinePower} (see \Cref{ass:microgridModel:transmissionLines}).%
\end{remark}%

\subsection{Overall model}
Having discussed the different components of the islanded \ac{mg}, we can now derive the a control-oriented model of the form \eqref{eq:overall-model} based on \eqref{eq:microgridModel:renewablePower:limits}, \eqref{eq:microgridModel:renewablePower:bigM}--\eqref{eq:microgridModel:storageUnit:powerlimits},
\eqref{eq:microgridModel:powerSharing:storage}--\eqref{eq:microgridModel:transmissionNetwork}.
Here, the auxiliary vector is $q(k) = [\power{}\tran ~ \switchState{r}\tran ~ \frequency(k)]\tran$.
Moreover, ${B = [\Zero_{S \times T} ~ -\samplingTime \Eye_S ~ \Zero_{S\times 2R+1}]}$,
$H_1 = \operatorname{diag}([\One_S\tran ~ -\One_S\tran]\tran)$ and ${h_1 = [(x^{\max})\tran ~ (-x^{\min})\tran]\tran}$.
Furthermore, $H_2$ and $h_2$ in \eqref{eq:overall-model-inequalities} are formed such that they reflect
\eqref{eq:microgridModel:renewablePower:limits},
\eqref{eq:microgridModel:renewablePower:bigM}--\eqref{eq:microgridModel:storageUnit:powerlimits},
\eqref{eq:microgridModel:powerSharing:thermal:bigM} and \eqref{eq:network:powerLimit}.
Additionally, $G$ and $g$ in \eqref{eq:overall-model-algebraic} are formed such that they reflect \eqref{eq:microgridModel:powerSharing:storage}, \eqref{eq:nodePowerToLinePower} and \eqref{eq:network:powerEquilibrium}.%


\section{Uncertainty model}
\label{sec:forecastAndScenarioTree}

This section focuses on the representation of uncertain load and renewable infeed.
First the generation of a collection of forecast scenarios is discussed.
Then scenario-trees are illustrated and the model of an \ac{mg} with uncertain load and renewable generation is derived.

\subsection{Representation of uncertainty by collections of scenarios}

\begin{figure}
	\centering
	\includegraphics{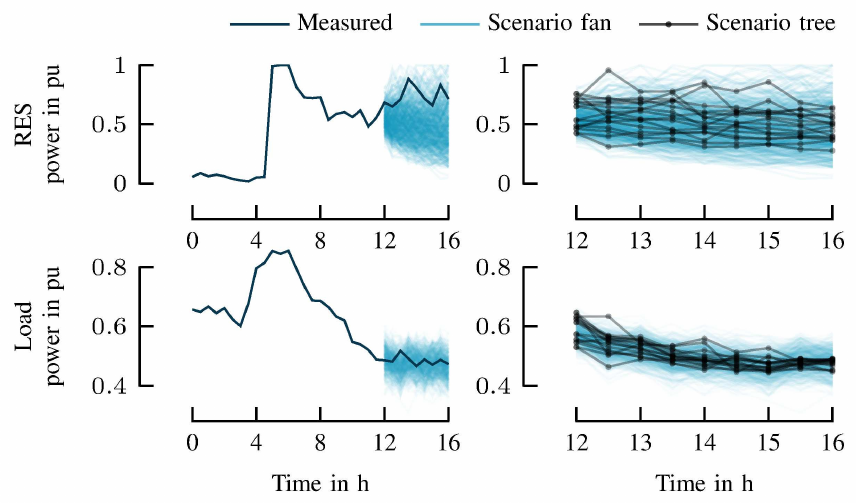}
	\caption{Wind and load power forecast with seasonal \ac{arima} models from \cite{HSB+2015} over a horizon of \unit[4]{h}.
	The collection of independent forecast scenarios was generated using Monte Carlo simulations with 500 seeds.
	The maximum number of children per node was chosen as 8 for the root node, 2 for the nodes of stage 1 and 1 for all other stages.}
	\label{fig:windAndLoadFcMCScenariosReducedScenarios}
\end{figure}

To obtain a representative probability distribution of load and renewable infeed for the controller, a sampling based Monte Carlo forecast was chosen.
Here, random samples that follow the error distribution obtained from the training of the forecast model are drawn and applied to the forecast to generate a collection of independent scenarios where every scenario has the same probability.
Thus, for a high number of independent forecast scenarios the probability distribution of the forecast is approximated.
To generate scenarios of load and available renewable power, the seasonal \ac{arima} models from \cite{HSB+2015} were used (see~\Cref{fig:windAndLoadFcMCScenariosReducedScenarios}).
For more information on time-series based forecasting, the reader is referred to \cite{BJR2013}.

To achieve a sufficiently accurate approximation of the forecast probability distribution, a high number of independent forecast scenarios is desired.
This leads to an undesired high computational complexity in finding a suitable control action as a high number of scenarios often leads to a high number of decision variables.
To satisfy both needs sufficiently, we generate scenario trees which serve as a more compact representation of the probability distribution.

\subsection{From data to scenario trees}

Scenario trees can be constructed from collections of forecast scenarios.
They can be obtained using methodologies such as~\cite{Pflug2015scen} or {scenario reduction} (see, e.g., \cite{HR2003,HeiRom09}).
There exist several other scenario generation algorithms, e.g., clustering-based \cite{Latorre20071339,Chen:2014:KST:2787349.2787351} as well as simulation and optimisation-based approaches~\cite{Beraldi20102322,Gulpinar20041291}.
An example of such a collection of independent forecast scenarios and the corresponding scenario tree for a forecast of load and available wind power is shown in \Cref{fig:windAndLoadFcMCScenariosReducedScenarios} on the right.
This tree was generated using fast forward selection as described by \cite[Algorithm~5]{OPG+2015} which is a modification of \cite[Algorithm~2.4]{HR2003}.

\subsection{Representation of uncertainty using scenario trees}
\label{sec:scenario_trees}

\begin{figure}
	\centering
	\includegraphics{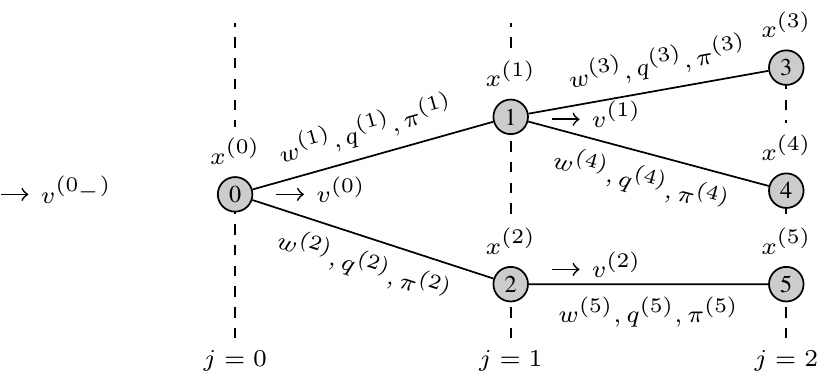}
	\caption{Probability tree with $N=2$ and $\mu=6$ nodes.
		The nodes of the stages are $\nodes(0) = \{0\}$ as well as $\nodes(1) = \{1, 2\}$ and $\nodes(2) = \{3, 4, 5\}$.
		Note that $\ancestor(1)=0$, $\ancestor(3)=1$ and $\ancestor(5)=2$.
		Also $\child(0)=\{1,2\}$, $\child(1)=\{3,4\}$ and $\child(2)=\{5\}$.
        The scenarios of the tree are the sequences $(0,1,3)$, $(0,1,4)$ and $(0,2,5)$.
        }%
	\label{fig:probabilitytreeUnsymmetricVariables}
\end{figure}

A scenario tree is a representation of the uncertain evolution of a discrete-time finite-valued random process as illustrated in \Cref{fig:probabilitytreeUnsymmetricVariables}.
A tree is a collection of $\mu\in\N$ nodes partitioned into stages $j \in \N_{[0, N]}$ and indexed with a unique identifier $i\in\N_{[0,\mu-1]}$.
Each node is associated with a possible value of the state of the process at a future time instant starting from an initial node $i=0$ at stage $j=0$ which is called the root node of the tree.
The set of nodes at stage $j$ is denoted by $\nodes(j)\subseteq \N_{[0,\mu-1]}$.
Conversely, the stage in which a node $i$ resides is denoted by $\stage(i)\in\N_{[0,N]}$.
The nodes at stage $j=N$ are called leaf nodes.
All non-leaf nodes $i$ at a stage $j$ possess a set of {child nodes} which are in stage $j+1$ and are connected to $i$; these are denoted by $\child(i)\subseteq \nodes(j+1)$.
Likewise, every node $i\neq 0$ is reachable from a single ancestor node, which resides in the previous stage and is denoted by $\ancestor(i)\in \nodes(\stage(i)-1)$.
A scenario is a sequence of nodes $(s_0, \ldots, s_N)$ such that $s_{N}\in\nodes(N)$ and $\ancestor(s_a) = s_{a-1}$, $a \in\N_{[1,N]}$.
Scenarios are uniquely identified by leaf nodes.
The probability of visiting node $i\in\N_{[0,\mu-1]}$ is denoted by $\pi^{(i)}>0$.
That said, at stage $j$, the set $\nodes(j)$ is a probability space with $\sum_{i\in \nodes(j)}\pi^{(i)}=1$.

For the tree shown in \Cref{fig:probabilitytreeUnsymmetricVariables}, the relation of the different variables at $j=0$ and $j=1$ is illustrated in \Cref{fig:probabilitytreeVariables}.
As shown here, we make a decision $v^{(0)}$ at stage $j=0$ without knowing which disturbance $w^{(1)}$ or $w^{(2)}$ will occur during the time between $j=0$ and $j=1$.
Depending on the disturbance $w$, different values of the auxiliary vector $q$ will occur.
These can be calculated using the function $f_q(\cdot, \cdot)$ derived from \eqref{eq:overall-model}.
Thus, the choice of $v^{(0)}$ accounts for $q^{(1)} = f_q(v^{(0)}, w^{(1)})$ and $q^{(2)} = f_q(v^{(0)}, w^{(2)})$ without knowing which one occurs.
Similarly, following \eqref{eq:overall-model-dynamics} the state $x$ at time instant $j=1$ is a function $f_x(\cdot, \cdot)$ of the state at time instant $j=0$ and the auxiliary vector that is present between $j=0$ and $j=1$.
Consequently, different values $q^{(1)}$ and $q^{(2)}$ lead to different states $x^{(1)} = f_x(x^{(0)}, q^{(1)}) = f_x(x^{(0)}, f_q(v^{(0)}, w^{(1)}))$ and $x^{(2)} = f_x(x^{(0)}, q^{(2)}) = f_x(x^{(0)}, f_q(v^{(0)}, w^{(2)}))$.

\begin{figure}
	\centering
	\includegraphics{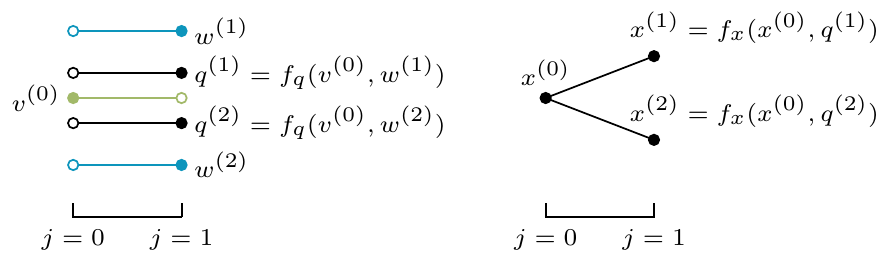}
	\caption{%
		Relation of optimisation variables on scenario tree for power (left) and energy (right).
		Note that the power values are piecewise constant.
		The power setpoints $v^{(0)}$ are represented by the green line,
		the uncertain inputs $w^{(1)}$, $w^{(2)}$ by blue lines
		and
		the auxiliary variables $q^{(1)}$, $q^{(2)}$ by black lines.
		Functions $f_q$ and $f_x$ are given implicitly via~\eqref{eq:overall-model}.
	}%
	\label{fig:probabilitytreeVariables}%
\end{figure}

In summary, given an initial measured system state $x^{(0)}$ at stage $j=0$ and a forecast in the form of $w^{(1)}$ or $w^{(2)}$ we make a decision $v^{(0)}$ using this information.
Similarly, at every stage $j\in\N_{[0,N-1]}$, we make decisions $v^{(i)}$, $i\in\nodes(j)$, using the information that is available up to that stage and using the forecast values $w^{(i_+)}$ for $i_+\in\child(i)$.
In other words, $v$ is decided using {causal} control laws.
This is indicated in \Cref{fig:probabilitytreeUnsymmetricVariables} by the positioning of $v^{(i)}$ at the nodes of the tree instead of at its edges.
Thus, across the nodes of the scenario trees, the control-oriented model \eqref{eq:overall-model} for all nodes $i_+\in\N_{[1,\mu-1]}$ with $i = \ancestor(i_+)$ become
\begin{subequations}\label{eq:forecastAndScenarioTree:constraints}
\begin{align}
	x^{(i)} + {B}q^{(i_+)} - x^{(i_+)} &= 0, \label{eq:forecastAndScenarioTree:constraints:dynamics}\\
 	H_1 \cdot x^{(i_+)}{}  &\leq h_1, \label{eq:forecastAndScenarioTree:constraints:dynamicsLimit} \\
 	H_2 \cdot \begin{bmatrix} v^{(i)}{}\tran & q^{(i_+)}{}\tran & w^{(i_+)}{}\tran \end{bmatrix}\tran &\leq h_2,\label{eq:forecastAndScenarioTree:constraints:inequalityConstraints}\\
	G \cdot \begin{bmatrix} v^{(i)}{}\tran & q^{(i_+)}{}\tran & w^{(i_+)}{}\tran \end{bmatrix}\tran &= g. \label{eq:forecastAndScenarioTree:equatlityContstraints}
\end{align}
\end{subequations}


\section{Operating costs}
\label{sec:operatingCosts}

\newcommand{\vi}{\ensuremath{v^{(i)}}}
\newcommand{\ziPlus}{\ensuremath{q^{(i_+)}}}
\newcommand{\piPlus}[1]{\ensuremath{p^{(i_+)}_{\mathrm{#1}}}}

In this section we derive an operating cost function for an \ac{mg}
which reflects the main objectives:
(i)~economic operation,
(ii)~low number of switching actions,
(iii)~high use of \ac{res} and
(iv)~desired state of storage units.
We use cost functions that are motivated by~\cite{HSB+2015}.
Our presentation will hinge on the scenario tree structure
introduced in \Cref{sec:scenario_trees}, i.e., objectives
will be defined at the nodes of a scenario tree.

The objective at node $i_+\in\N_{[1, \mu-1]}$ with $i = \ancestor(i_+)$ and $i_- = \ancestor(i)$ is composed
of $\ell_{\mathrm{o}}(v^{(i)}, v^{(i_-)}, \ziPlus)\in\R_{\geq 0}$ that reflects items (i)--(iii),
and $\ell_{\mathrm{s}}(x^{(i_+)}) \in \R_{\geq 0}$ that reflects (iv).
A discount factor ${\gamma\in (0,1)}$ is used to emphasise decisions in the near future.
Thus, the cost associated with node $i_+$ is
\begin{multline}
	\ell(x^{(i_+)}, v^{(i)}, v^{(i_-)}, \ziPlus) = \\ \gamma^{\stage(i_+)} \big(
	\ell_{\mathrm{o}}(v^{(i)}, v^{(i_-)}, \ziPlus) 
	+ \ell_{\mathrm{s}}(x^{(i_+)}) \big) .
\end{multline}

\begin{remark}
	Note that the cost associated with node $i_+$, $\ell_{\mathrm{o}}(v^{(i)}, v^{(i_-)}, \ziPlus)$, also depends on nodes $i = \ancestor(i_+)$ and $i_-=\ancestor(i)$.
	The reason for this is that $q^{(i_+)}$ is a function of the input $v^{(i)}$ (see \Cref{sec:scenario_trees}).
	Therefore, it is required to use $q^{(i_+)}$ and $v^{(i)}$ with $i = \ancestor(i_+)$ in the same cost function.
	As the cost that is caused by $v^{(i)}$ partly depends on the input at the past time instant, e.g., in case of switching penalties, it is further required to include $v^{(i_-)}$.%
\end{remark}

The economically motivated cost includes
(i)~operating costs of conventional units, $\ell_{\mathrm{t}}^{\mathrm{rt}} (\vi, \ziPlus)\in\R_{\geq 0}$,
(ii)~costs for switching conventional units on or off, $\ell_{\mathrm{t}}^{\mathrm{sw}}(v^{(i)}, v^{(i_-)})\in\R_{\geq 0}$, and
(iii)~costs incurred by low utilisation of renewable sources, $\ell_\mathrm{r}(\ziPlus)\in\R_{\geq 0}$,
i.e.,
\begin{multline}
	\label{eq:operatingCosts:economic}
	\ell_{\mathrm{o}}(v^{(i)}, v^{(i_-)}, \ziPlus) = \ell_{\mathrm{t}}^{\mathrm{rt}} (\vi, \ziPlus) +
	\\
	\ell_{\mathrm{t}}^{\mathrm{sw}}(v^{(i)}, v^{(i_-)}) +
	\ell_\mathrm{r}(\ziPlus).
\end{multline}

\begin{subequations}
More precisely, following \cite{JMK2012}, the operating cost of the conventional units is modelled as
\begin{equation}
	\ell_{\mathrm{t}}^{\mathrm{rt}} (\vi, \ziPlus)
	= c_{\mathrm{t}}\tran \delta_{\mathrm{t}}^{(i)} + {c_{\mathrm{t}}^\prime}\tran \piPlus{t} + \|\diag{c_{\mathrm{t}}^{\prime\prime}} \piPlus{t} \|_2^2 ,
\end{equation}
with weights $c_{\mathrm{t}}\in\R^T_{>0}$,
$c_{\mathrm{t}}^{\prime}\in\R^T_{>0}$ and $c_{\mathrm{t}}^{\prime\prime}\in\R^T_{>0}$ and using the square of the Euclidean norm $\| \cdot \|_2^2$.

Costs for switching the conventional units on or off from node $i_- = \ancestor(i)$ to node $i$ are modelled by
\begin{equation}
	\ell_{\mathrm{t}}^{\mathrm{sw}}(v^{(i)}, v^{(i_-)}) =  \|\diag{c_{\mathrm{t}}^{\mathrm{sw}}} (\delta_\mathrm{t}^{(i_-)} - \delta_\mathrm{t}^{(i)} ) \|_2^2
\end{equation}
with weight $c_{\mathrm{t}}^{\mathrm{sw}}\in\R^T_{>0}$.
For the root node, $\delta_\mathrm{t}^{(0_-)}$ denotes the initial switch state of the conventional units, $\delta_{\mathrm{t},k-1}$.

It is desired to maximise renewable infeed.
This can be included in the stage cost as a penalty if the renewable infeed is less than the nominal value $p_{\mathrm{r}}^{\Max}$, i.e., with $c_{\mathrm{r}}\in\R^R_{>0}$,
\begin{equation}
	 \ell_\mathrm{r}(\ziPlus) = \|\diag{c_{\mathrm{r}}} (p_{\mathrm{r}}^{\max} - \piPlus{r})\|_2^2.
\end{equation}
\end{subequations}

Very high or very low values of $x(k)$, can increase the ageing of batteries \cite{VNW+2005}.
To reduce the occurrence of such values, we introduce the interval of desired values of $x(k)$, $[\tilde{x}^\Min, \tilde{x}^\Max] \subseteq [{x}^\Min, {x}^\Max]$.
To enforce $x(k) \in [\tilde{x}^\Min, \tilde{x}^\Max]$ we define $\ell_{\mathrm{s}}(x^{(i)})$ with $c_\mathrm{s}\in\R^S_{>0}$ as
\begin{equation}\label{eq:softConstraints:cost}
	\ell_{\mathrm{s}}(x^{(i)}) = c_\mathrm{s}\tran (\max(\tilde{x}^\Min - x^{(i)}, \Zero_S) - \min(\tilde{x}^\Max - x^{(i)} , \Zero_S)).
\end{equation}

\begin{subequations}\label{eq:costVectorAtStageJPlusOne}
For every node $i_+\in\N_{[1, \mu-1]}$ with $i_+ \in \child(i)$ as well as $i_- = \ancestor(i)$ we define the cost variable
\begin{equation}
	Z^{(i_+)} = \ell(x^{(i_+)}, v^{(i)}, v^{(i_-)}, \ziPlus).
\end{equation}
Thus, for stage $j \in \N_{[1,N]}$, the vector
\begin{equation}\label{eq:Z-stage-j}
	Z_{j} = [Z^{(i_+)}]_{i_+ \in \nodes(j)}
\end{equation}
is associated with a random variable on the probability space $\nodes(j)$.
Note that $Z_{j}\in\R^{|\nodes(j)|}$ is a function of states $x^{(i_+)}$, inputs $v^{(i)}$, $v^{(i_-)}$ and auxiliary variables $\ziPlus$ for all $i_+\in\nodes(j)$, $i = \ancestor(i_+)$ and $i_- = \ancestor(i)$.
Using \eqref{eq:Z-stage-j}, we can describe the multi-stage cost by the sequence $(Z_1, \ldots, Z_{N})$.
\end{subequations}

\section{Measuring risk}
\label{sec:measuringRisk}

In this section we introduce the notion of risk
measures and provide a few examples thereof.
Furthermore, we will discuss one specific risk measure: the \acf{avar}.

\subsection{Introduction to risk measures}

\newcommand{\probSimplex}{\ensuremath{\mathbb{D}}}
\newcommand{\admissSet}{\ensuremath{\mathbb{A}}}

Let $\Omega = \{\omega_1, \ldots, \omega_K\}$ be a sample space, whose elements $\omega_i$ have probabilities $\pi_i > 0$ for $i \in \N_{[1, K]}$.
The probabilities can be collected in a vector $\pi=[\pi_1 ~\cdots ~\pi_K]\tran$.
This vector is an element of the probability simplex, i.e., the set ${\probSimplex=\{\pi\in\R^K{}\mid{}\pi_i \geq 0,\ \sum_{i=1}^K \pi_i=1\}}$.
A random variable on $\Omega$ is a function $\bar{Z}:\Omega\to\R$ with $\bar{Z}(\omega_i)=Z_i$.
All values of $\bar{Z}$ can be collected in a vector $Z=[Z_1 ~ \cdots ~ Z_K]\tran\in\R^K$.
Note that in our case, these values are given by \eqref{eq:costVectorAtStageJPlusOne} and represent the operation cost at the nodes of the scenario tree.

A risk measure on $\Omega$ is a mapping $\rho:\R^K\to\R$ that, roughly speaking, quantifies the significance of extreme events.
A well-known, yet trivial, risk measure is the expectation operator
$\E_\pi(Z) = \sum_{i=1}^{K}\pi_i Z_i$,
which is often referred to as a risk-neutral measure as it carries no deviation information.
Another example of a risk measure is the maximum operator
$\max(Z) = \max\{Z_i{}\mid{} i{=}1,{\ldots},K\}$ which quantifies the
worst-case value, or realisation, of $Z$.
It can be written as
\begin{equation}\label{eq:max-dual-representation}
  \max(Z)  = \max_{\pi^{\prime}\in\probSimplex}\E_{\pi^{\prime}}(Z),
\end{equation}
where the maximum is taken with respect to all probability
vectors $\pi^{\prime}\in\probSimplex$.
Therefore, it can be interpreted as the worst-case expectation
over all possible probability distributions.
Following the notation in \eqref{eq:max-dual-representation}, the expectation operator is
\begin{equation}\label{eq:expectation-dual-representation}
  \E_{\pi}(Z) = \max_{\pi^{\prime}\in \{\pi\}} \E_{\pi^{\prime}}(Z),
\end{equation}

In this work we focus on coherent%
\footnote{%
Let $\bar{Z}$, $\bar{Z}^\prime$ be two random variables on $\Omega$ and ${Z}$, ${Z}^\prime$ the corresponding vectors.
Then, $\rho:\R^K\to\R$ is a coherent risk measure (see \cite[Def.~6.4.]{SDR2014}) if it is
(i)~convex, i.e., $\rho(\lambda Z + (1-\lambda) {Z}^\prime) \leq \lambda \rho(Z) + (1-\lambda) \rho({Z}^\prime)$ for all $\lambda\in[0,1]$,
(ii)~monotone, i.e., $\rho(Z) \leq \rho({Z}^\prime)$ whenever $Z \leq {Z}^\prime$,
(iii)~translation equi-variant, i.e., $\rho(c \One_K + Z) = c + \rho(Z)$ for all $c\in\R$, and
(iv)~positive homogeneous, i.e., $\rho(\alpha Z) = \alpha \rho(Z)$ for all $\alpha\in\R_{\geq 0}$.

}
risk measures as they quantify risk in a natural and intuitive way.
Also, they allow for a computationally tractable reformulation of risk-averse problems which renders them suitable for \ac{mpc} applications.

It is shown in~\cite[Thm.~6.5]{SDR2014} that all coherent risk measures can be written in a form reminiscent of \eqref{eq:max-dual-representation} and~\eqref{eq:expectation-dual-representation} as
\begin{equation}\label{eq:risk-measure-dual-representation}
    \rho(Z) = \max_{\pi^{\prime}\in \admissSet}\E_{\pi^{\prime}}(Z),
\end{equation}
where $\admissSet\subseteq \probSimplex$ is a closed convex set which contains $\pi$.
Every mapping of the form~\eqref{eq:risk-measure-dual-representation}, where the so-called ambiguity set $\admissSet$ of $\rho$ is a closed convex set which contains $\pi$, is coherent.
One interpretation of \eqref{eq:risk-measure-dual-representation} is that we take the worst-case expectation of $Z$ with respect to $\pi^{\prime}\in\admissSet$, that is, with respect to an inexactly known distribution \cite{van2016distributionally}.
The expectation and maximum operators are two extreme cases of coherent risk measures.
The ambiguity set of $\max(Z)$ is the largest possible set, i.e., $\admissSet = \probSimplex$.
The ambiguity set of $\E(Z)$ is the smallest possible set, i.e., $\admissSet = \{\pi\}$.
Other risk measures can be constructed by taking ambiguity sets of intermediate size to cope with uncertain knowledge of a probability distribution.

Risk measures, whose ambiguity set is a polytope, are called polytopic.
Given the extreme points of the ambiguity set, its vertices $\pi_{1}, \ldots, \pi_{L}$, they assume the convenient representation
\begin{equation}
 \label{eq:polytopic_risk_measure}
 \rho(Z) = \max_{l\in\N_{[1, L]}} \E_{\pi_{l}}(Z).
\end{equation}

\subsection{Average value-at-risk}
\label{sec:avar}

\begin{figure}
  \centering
  \includegraphics{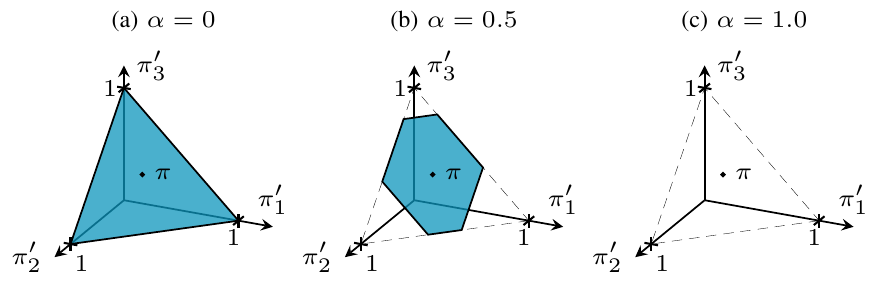}%
  \caption{Ambiguity sets $\admissSet_\alpha$ over a probability space with $K=3$ and $\pi=[0.3 ~ 0.3 ~ 0.4]^{\scriptscriptstyle\top}$.
           The ambiguity set $\admissSet_0$ is the whole probability simplex,
           and $\admissSet_{1}$ the point $\{\pi\}$.
           Naturally, $\admissSet_{\alpha_1}\subseteq \admissSet_{\alpha_2}$ for $\alpha_1 \geq \alpha_2$.}
  \label{fig:avar-admissibility}
\end{figure}

\begin{subequations}\label{eq:avar:admissibilitySet}
A commonly used risk measure is the \acl{avar}, which is given in the form of \eqref{eq:risk-measure-dual-representation} as
\begin{equation}
    \rho(Z) = \AVaR (Z) =\max_{\pi^{\prime}\in \admissSet_{\alpha}}\E_{\pi^{\prime}}(Z)
\end{equation}
with the ambiguity set
\begin{equation}
    \admissSet_{\alpha} = \begin{cases}
        \{\pi^{\prime}\in\probSimplex {}\mid{} \pi^{\prime} \leq \tfrac{1}{\alpha}\pi\},&\text{ if } \alpha \in (0,1],
        \\
        \probSimplex,&\text{ if } \alpha=0.
        \end{cases}
\end{equation}
for $\alpha\in\R_{[0,1]}$.
Clearly, \ac{avar} is a polytopic risk measure since $\admissSet_{\alpha}$ is a polytope.
As shown in \Cref{fig:avar-admissibility}, $\admissSet_\alpha$ can be modified by varying $\alpha$.
This includes the extreme cases $\alpha=1$, where $\admissSet_1 = \{\pi\}$ and $\alpha = 0$, where $\admissSet_0 = \probSimplex$.
\end{subequations}
Using convex duality arguments and the additional free variable ${t\in\R}$, \eqref{eq:avar:admissibilitySet} can be transformed into~\cite[Ex.~6.19]{SDR2014}
\begin{equation}\label{eq:avar:minEssmax}
 \AVaR (Z) {=} \begin{cases}
                \min
        \big(t + \textstyle \\
        \quad \E_{\pi}\big(  \max\big(\frac{Z - t \One_{K}}{\alpha}, \Zero_{K} \big)\big)\big),
			    & \text{for } \alpha \in (0,1]
		\\
                \max(Z), &{} \text{for } \alpha = 0.
              \end{cases}
\end{equation}

We will now give an equivalent representation of the average value-at-risk
which will be useful in \Cref{sec:reformulation}
as it will facilitate the solution of risk-averse optimal control problems.

\begin{proposition}\label[proposition]{prop:avar}
 The \acl{avar} at level $\alpha\in[0,1]$ is given by
 \begin{align}
 \AVaR (Z)
        =
        \min_{
    \subalign{
    \xi {}\geq{}& \Zero_K \\
    \alpha \xi {}\geq{}& Z {}-{} t\One_K \hspace{-2em}}
    }
    (t + \E_{\pi}(\xi)). \label{eq:avarEpigraphRelaxation}
\end{align}

\begin{proof}
 Let $\alpha \in (0,1]$. Using the epigraphical relaxation (see \cite[Sec.~3.1.7 \& 4.1]{BV2004}) of $\max({}\cdot{}, 0)$,
 we have that
 \begin{equation}
    \max(y, \Zero_{K}) = \min_{\subalign{
      \xi &\geq \Zero_{K} \\
      \xi &\geq y
    }}
    \xi,
    \label{eq:max_epi_relaxation}
 \end{equation}
 for all $y\in\R^K$,
 where $\xi\in\R^{K}$ is a slack variable.
 Therefore,
 \begin{align*}
  \AVaR(Z)
  {}={}&
  \min
  \left(
    t {}+{}
    \E_{\pi} \big( \max\big(\tfrac{Z - t \One_{K}}{\alpha}, \Zero_{K} \big)\big)
    \right)
  \\
  {}={}& \min
    \Big(
      t + \E_{\pi} \big(
      \min_{\subalign{
        \xi &\geq \Zero_{K} \\
        \xi &\geq \tfrac{Z - t \One_{K}}{\alpha} \hspace{-1.7em}
      }}
      \hspace{1.0em} \xi
      \big)
    \Big)
  \\
  {}={}& \min
    \Big(
      t + \E_{\pi} \big(
      \hspace{0.5em}
      \min_{\subalign{
        \xi &\geq \Zero_{K} \\
        \hspace{-0.5em} \alpha \xi &\geq Z - t \One_{K} \hspace{-1.7em}
      }}
      \hspace{1.0em} \xi
      \big)
    \Big),
 \end{align*}
 where the second equation is by virtue of \eqref{eq:max_epi_relaxation}.
 Using \cite[Prop. 6.60]{SDR2014}, we interchange the expectation operator, $\E_{\pi}$ and the minimum to arrive at \eqref{eq:avarEpigraphRelaxation}.

 The right hand side of \eqref{eq:avarEpigraphRelaxation} is well defined
 for $\alpha=0$, i.e.,
 \begin{align*}
  \mathrm{AV@R}_{0} (Z)
  &=
        \min_{
    \subalign{
    \xi {}\geq{}& \Zero_K \\
    \hspace{-0.1em} t\One_K \geq{}& Z
    }}
    (t + \E_{\pi}(\xi)) \\
  &=
    \min_{\hspace{-0.1em} t\One_K \geq Z} t {}+{}
    \min_{\xi {}\geq{} \Zero_{K}} \E_{\pi}(\xi)
  {}={} \min_{t\One_K \geq Z} t
  {}={} \max (Z).
 \end{align*}
Therefore, \eqref{eq:avarEpigraphRelaxation} holds for all $\alpha \in [0,1]$.
\end{proof}%
\end{proposition}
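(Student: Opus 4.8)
The plan is to derive \eqref{eq:avarEpigraphRelaxation} from the min-representation \eqref{eq:avar:minEssmax} of the average value-at-risk by relaxing the nonsmooth map $\max(\cdot,\Zero_K)$, treating the regimes $\alpha\in(0,1]$ and $\alpha=0$ separately. For $\alpha\in(0,1]$ I would start from the first case of \eqref{eq:avar:minEssmax}, namely $\AVaR(Z)=\min_t\big(t+\E_\pi(\max(\tfrac{Z-t\One_K}{\alpha},\Zero_K))\big)$, and introduce a slack vector $\xi\in\R^K$ via the elementary epigraphical identity $\max(y,\Zero_K)=\min\{\xi : \xi\geq\Zero_K,\ \xi\geq y\}$ (see \cite[Sec.~3.1.7 \& 4.1]{BV2004}), applied componentwise with $y=\tfrac{Z-t\One_K}{\alpha}$. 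Since $\alpha>0$, the constraint $\xi\geq\tfrac{Z-t\One_K}{\alpha}$ is equivalent to $\alpha\xi\geq Z-t\One_K$, so this substitution rewrites the objective as $t+\E_\pi\big(\min_{\xi\geq\Zero_K,\ \alpha\xi\geq Z-t\One_K}\xi\big)$.

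The key step is to interchange the inner minimisation with the expectation. Since $\Omega$ is finite, $\E_\pi$ is the weighted sum $\sum_{i=1}^{K}\pi_i(\cdot)_i$ with $\pi_i>0$, and the feasible set for $\xi$ is a Cartesian product of one scalar constraint per coordinate; hence the weighted sum of the coordinatewise minima equals the minimum of the weighted sum over the product set. This is an instance of the interchangeability principle \cite[Prop.~6.60]{SDR2014} and gives $\E_\pi\big(\min\xi\big)=\min\E_\pi(\xi)$ over the common constraint set. Folding the resulting nested minimisations over $t$ and $\xi$ into a single joint minimisation then yields exactly \eqref{eq:avarEpigraphRelaxation}. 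I expect this interchange to be the only step that genuinely needs justification; the remainder is algebraic rearrangement.

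Finally, for $\alpha=0$ I would evaluate the right-hand side of \eqref{eq:avarEpigraphRelaxation} directly. The constraint $\alpha\xi\geq Z-t\One_K$ then reduces to $t\One_K\geq Z$, which no longer involves $\xi$, so $\xi$ decouples and $\min_{\xi\geq\Zero_K}\E_\pi(\xi)=0$ because $\pi\geq\Zero_K$. Hence the right-hand side collapses to $\min_{t\One_K\geq Z}t=\max(Z)$, which by the second case of \eqref{eq:avar:minEssmax} equals $\mathrm{AV@R}_{0}(Z)$. Combining both regimes establishes the identity for all $\alpha\in[0,1]$.
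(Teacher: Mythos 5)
Your proposal is correct and follows essentially the same route as the paper's own proof: the epigraphical relaxation of $\max(\cdot,\Zero_K)$ applied to \eqref{eq:avar:minEssmax}, rescaling the slack constraint by $\alpha$, the expectation--minimum interchange via \cite[Prop.~6.60]{SDR2014}, and a direct evaluation of the right-hand side at $\alpha=0$ yielding $\max(Z)$. Your added remark that the interchange is elementary here because the constraint set is a Cartesian product over a finite probability space is a fine (and correct) justification, but it does not change the argument.
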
%

Having discussed risk measures, we will now use them to construct multistage risk-averse \ac{mpc} problems.


\section{Risk-Averse \ac{mpc}}
\label{sec:riskAverseMpc}
In this section we will first introduce conditional risk mappings on scenario trees.
Then we will construct multistage risk-averse \ac{mpc} problems and reformulate them as \acp{miqcqp}.

\subsection{Conditional risk mappings on scenario trees}
\label{sec:conditional_risk_mappings}

A conditional risk mapping on a scenario tree is a generalisation of the notion of conditional expectation
which is the expectation of a random cost $Z_{j+1}$ at stage $j+1$ given
all information we can surmise up to stage $j$.
Roughly speaking, a conditional risk mapping at a non-leaf node $i$ of the tree
returns the risk of the cost of the children of $i$.
Conditional risk mappings can be constructed as follows.

For every stage $j\in\N_{[0,N-1]}$, the set $\nodes(j)$ is a probability
space whose elements $i \in \nodes(j)$ have probability $\pi^{(i)}$.
Naturally, we can define real-valued random variables with corresponding vectors $Z_{j}\in\R^{|\nodes(j)|}$ on that space.
Likewise, the set $\nodes(j+1)$ is also a probability space.
A conditional risk mapping at stage $j$ is a mapping
\begin{equation}\label{eq:risk-mapping-structure}
  \rho_j: \R^{|\nodes(j+1)|} \to \R^{|\nodes(j)|},
\end{equation}
which is constructed as we explain hereafter.

\begin{figure}[t]
  \centering
  \includegraphics{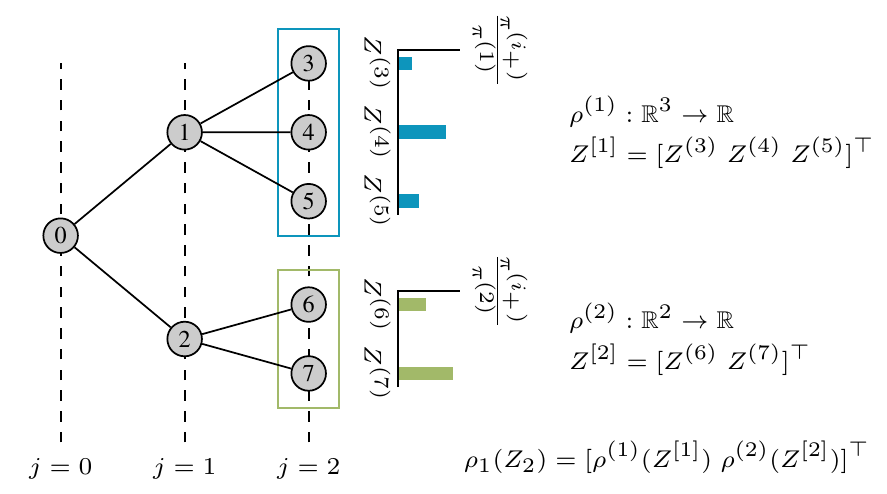}%
  \caption{%
  	Example of conditional risk mapping conditioned at stage $j=1$.
    Following \eqref{eq:risk-mapping-structure}, $\rho_1$ is a conditional risk mapping $\rho_1:\R^5 \to \R^2$.
    Note here that $Z_2=[Z^{(3)} ~ \cdots ~ Z^{(7)}]^{\scriptscriptstyle\top}$, is decomposed into $Z^{[1]}$ and $Z^{[2]}$.
	}
    \label{fig:probabilitytreeRiskMapping}
\end{figure}

For all $i\in\nodes(j)$, the sets $\child(i)\subseteq\nodes(j+1)$ are disjoint
and define a partition over $\nodes(j+1)$, i.e.,
\(
 \textstyle \nodes(j+1) {}={} \bigcup_{i\in\nodes(j)}\child(i).
\)
Given that node $i$ is visited, $i_+\in\child(i)$ occurs with probability $\nicefrac{\pi^{(i_+)}}{\pi^{(i)}}$.
This makes $\child(i)$ into a probability space whereon we can construct
random variables with vectors
\(
	Z^{[i]}=\big[Z^{(i_+)}\big]_{i_+\in\child(i)}.
\)
In particular, $Z_{j+1}$ on the probability space $\nodes(j+1)$ is decomposed
into vectors $Z^{[i]}$ on $\child(i)$, $i\in\nodes(j)$.

Using a coherent risk measure $\rho^{(i)}:\R^{|\child(i)|}\to\R$,
we can compute a risk $\rho^{(i)}(Z^{[i]})$ for every vector $Z^{[i]}$.
Thus, we define the conditional risk mapping $\rho_j$ at stage $j$ as
\begin{equation}\label{eq:risk-mapping:definition}
	\rho_j(Z_{j+1}) {}={}
	\big[
		\rho^{(i)}(Z^{[i]})
	\big]_{i\in\nodes(j)}.
\end{equation}
In words, $\rho_j$ maps the probability distribution at stage $j+1$ into a vector whose $i$th element denotes the risk that will incur if one is at node $i\in\nodes{(j)}$.
For a simple scenario tree, this is illustrated in \Cref{fig:probabilitytreeRiskMapping}.

\begin{remark}
For the case where $\rho^{(i)}(Z^{[i]})$ is $\AVaR(Z^{[i]})$,
according to \cref{prop:avar}, the risk of $Z^{[i]}$ is
\begin{equation}\label{eq:avarEpigraphRelaxation:node}
	\rho^{(i)}(Z^{[i]})
 	= \min_{
		\subalign{
		\xi^{[i]}	&\geq{} \Zero_{|\child(i)|} \hspace{-3.5em} \\
		\alpha \xi^{[i]} &\geq{} Z^{[i]} {}-{} t^{(i)} \One_{|\child(i)|}\hspace{-5.5em}
		}}
			\big(t^{(i)} + \E_{\pi^{[i]}}(\xi^{[i]})\big)
\end{equation}
with $t^{(i)}\in\R$
and $\xi^{[i]} = [\xi^{(i_+)}]_{i_+\in\child(i)}$, $\xi^{(i_+)}\in\R$
for $i\in\N_{[0,\mu-1]} \setminus \nodes(N)$.
As we consider the probability space $\child(i)$, we are interested in the probability of visiting $i_+\in\child(i)$ given that we are at node $i$.
Therefore, the probabilities are $\pi^{[i]} = [\nicefrac{\pi^{(i_+)}}{\pi^{(i)}}]_{i_+\in\child(i)}$ and \eqref{eq:avarEpigraphRelaxation:node} is equivalent to
\begin{equation}\label{eq:avarEpigraphRelaxation:node:long}
   	\rho^{(i)}(Z^{[i]})
 	=
 	\min_{
		\subalign{
		\xi^{[i]}	&\geq{} \Zero_{|\child(i)|} \hspace{-3.5em} \\
		\alpha \xi^{[i]} &\geq{} Z^{[i]} {}-{} t^{(i)} \One_{|\child(i)|} \hspace{-5.5em}
		}}
			\big(t^{(i)} + \textstyle \sum_{i_+\in\child(i)} \tfrac{\pi^{(i_+)}}{\pi^{(i)}} \xi^{(i_+)} \big).
\end{equation}
\end{remark}

Having discussed conditional risk mappings on scenario trees, we can now use them to construct a multistage risk~measure based on the vector $Z = [Z_1\tran ~ \cdots ~ Z_N\tran]\tran$ that is associated with the multistage random variable.

\subsection{Risk-averse optimal control}

Let $Z_j\in\R^{|\nodes(j)|}$ be the cost in \eqref{eq:costVectorAtStageJPlusOne}.
For the sequence $(Z_1,\ldots, Z_N)$
and given a sequence of conditional risk mappings $\rho_j$ of the form~\eqref{eq:risk-mapping:definition},
the nested multistage risk measure $\varrho_N:\R^{|\nodes(1)|}\times\cdots\times\R^{|\nodes(N)|}\to\R$ is \cite{HSBP2017,CP2014,SDR2014}%
\begin{multline}\label{eq:varRho}
	\varrho_N(Z_1, \ldots, Z_N) = \\
	\rho_0 \left(Z_1+ \rho_1\left(Z_2 + \ldots + \rho_{N-1}(Z_N)\right) \ldots \right).
\end{multline}
Nested multistage risk measures possess favourable properties which render them
suitable for optimal control formulations.
The most important properties are that they
(i)~are suitable for multistage formulations as they measure how risk propagates over time,
(ii)~are coherent risk measures over the space $\R^{|\nodes(1)|} \times \cdots \times \R^{|\nodes(N)|}$~\cite[Sec.~6.8]{SDR2014},
(iii)~give rise to optimal control problems which are amenable to dynamic
       programming formulations~\cite{Shapiro2012},
(iv)~allow for \ac{mpc} formulations with closed-loop stability guarantees~\cite{CP2014,HSBP2017}.

The definition of $\varrho_N$ in \eqref{eq:varRho} allows to formulate the following optimisation problem with decision variables
${\boldsymbol{v} = [v^{(i)}]_{i\in\N_{[0, \mu-1]}}}$,
$\boldsymbol{x} = [x^{(i)}]_{i\in\N_{[0, \mu-1]}}$,
$\boldsymbol{q} = [q^{(i)}]_{i\in\N_{[0, \mu-1]}}$.

\begin{problem}
	\label{prob:riskAverseMpc:nestedConditionalRiskMappings}
Solve the optimal control problem
\[
	\Minimize_{\boldsymbol{v}, \boldsymbol{x}, \boldsymbol{q}}\ \varrho_N(Z_1,\ldots, Z_N)
\]
$\subjectto$
\begin{align*}
	& \text{constraints \eqref{eq:forecastAndScenarioTree:constraints} and given initial conditions } x^{(0)}, \delta_{\mathrm{t}}^{(0_-)} \\
	& \forall i_+\in\N_{[1,\mu-1]} \text{ and }  i = \ancestor(i_+).
\end{align*}
\end{problem}

Note that the cost function is the composition of a series of, typically,
nonsmooth mappings.
Such problems have been studied in the operations research literature and
are typically solved by means of cutting plane methods which allow the
solution of problems with short prediction horizons
and linear stage cost functions \cite{Asamov2015,Bruno2016979}.
Here, we employ the method introduced in~\cite{HSBP2017} to decompose
the nested conditional risk mappings and reformulate \Cref{prob:riskAverseMpc:nestedConditionalRiskMappings} as an \ac{miqcqp}.

\subsection{Reformulation as an \acs{miqcqp}}
\label{sec:reformulation}

In this section we employ \eqref{eq:avarEpigraphRelaxation:node:long} to decompose the nested formulation stated above for the case of \ac{avar}.
By doing so, we will cast the overall \ac{mpc} problem as an \ac{miqcqp}.

\begin{theorem}[Problem reformulation]
Define the variables $t^{(i)}$ for all non-leaf nodes $i$
and the variables $\xi^{(i_+)}$ for all nodes $i_+\in\N_{[1,\mu-1]}$.
Define
\[
 \Psi^{(i)} = t^{(i)} {}+{}
	           \textstyle \sum_{i_+\in\child(i)}
	             \tfrac{\pi^{(i_+)}}{\pi^{(i)}}
	                \xi^{(i_+)},
\]
for all non-leaf nodes $i$, which is the cost in the minimisation
in \eqref{eq:avarEpigraphRelaxation:node:long}.
Let the underlying risk measure be the average value-at-risk
at level $\alpha\in[0,1]$.
Then, with the additional decision variables
$\boldsymbol{t} = [t^{(i)}]_{i\in(\N_{[0, \mu-1]}\setminus \nodes(N))}$ and
$\boldsymbol{\xi} = [\xi^{(i)}]_{i\in\N_{[1, \mu-1]}}$,
\cref{prob:riskAverseMpc:nestedConditionalRiskMappings} is equivalent to
the following problem, in the sense that both problems yield equal optimal values.
 \begin{problem}
\label[problem]{prob:riskAverseMpc}
Solve the optimisation problem
\[
	\Minimize_{{\boldsymbol{v}, \boldsymbol{x}, \boldsymbol{q}}, \boldsymbol{t}, \boldsymbol{\xi}}\ \Psi^{(0)}
\]
$\subjectto$
\begin{align*}
	& \phantom{\alpha} \xi^{[i]} \geq 0, \\
	& \alpha \xi^{[i]} \geq Z^{[i]} - t^{(i)}\One_{|\child(i)|}, {}\text{ if } \stage(i) = N-1 \\
	& \alpha \xi^{[i]} \geq Z^{[i]} + \Psi^{[i]} - t^{(i)}\One_{|\child(i)|}, {}\text{ if } \stage(i) < N-1 \\
 	& \text{constraints \eqref{eq:forecastAndScenarioTree:constraints} and given initial conditions } x^{(0)}, \delta_{\mathrm{t}}^{(0_-)} \\
	& \forall i_+\in\N_{[1,\mu-1]} \text{ and }  i = \ancestor(i_+).
\end{align*}
\end{problem}

\begin{proof}
\begin{subequations}\label{eq:phi}
Let us introduce a vector $\Phi \in \R^{\mu - |\nodes(N)|}$, which is defined over the
non-leaf nodes of the scenario tree.
In particular, we associate a value $\Phi^{(i)}\in\R$ to every non-leaf node $i\in \N_{[0, \mu-1]} \setminus \nodes(N)$.
Similar to~\eqref{eq:costVectorAtStageJPlusOne}, we segment $\Phi$ stage-wise into $\Phi_j = [\Phi^{(i)}]_{i\in\nodes(j)}$.
Let us first define $\Phi_{N-1}\in\R^{|\nodes(N-1)|}$ over the set $\nodes(N-1)$ as
\begin{equation}\label{eq:phi:nMinusOne} 
	\Phi_{N-1} = \rho_{N-1}(Z_N).
\end{equation}
$\Phi_{N-1}$ is the conditional value-at-risk at stage $N-1$ and
it corresponds to the innermost term in the nested multistage cost
function \eqref{eq:varRho} in \cref{prob:riskAverseMpc:nestedConditionalRiskMappings}.
Furthermore, let us define $\Phi_j\in\R^{|\nodes(j)|}$ over
the set $\nodes(j)$, $j\in\N_{[0, N-2]}$ as
\begin{equation}\label{eq:phi:nElse} 
	\Phi_{j} = \rho_{j}(Z_{j+1} + \Phi_{j+1}).
\end{equation}
\end{subequations}

The recursive definition \eqref{eq:phi} allows us to express the nested multistage
risk measure \eqref{eq:varRho} as
\begin{align} 
 \label{eq:phi_0_total_cost}
 \varrho_N(Z_1,\ldots, Z_N)
 {}={}&
 \rho_0 (Z_1+ \rho_1(Z_2 + \ldots
 \notag\\
 &\quad \quad + \rho_{N-2}(Z_{N-1}+ \Phi_{N-1})) \ldots )
 \notag \\
\smash{\vdotswithin{ = }} &
 \notag \\
 {}={}&
 \rho_0 (Z_1 + \Phi_1) = \Phi_0.
\end{align}

Note that $\Phi_{N-1}$ in \eqref{eq:phi:nMinusOne} is composed of elements
$\Phi^{(i)}$, $i\in\nodes(N-1)$ and because of \eqref{eq:avarEpigraphRelaxation:node:long},
\begin{equation} 
 \label{eq:Phi_definition_endOfTree}
 \Phi^{(i)}
 {}={}
\rho^{(i)}(Z^{[i]})
  	{}={} \min_{
		\subalign{
		\xi^{[i]}	&\geq{} 0\\
		\alpha \xi^{[i]} &\geq{} Z^{[i]} {}-{} t^{(i)}1_{|\child(i)|} \hspace{-5.5em}
		}}
	    {\Psi^{(i)}},
\end{equation}
where the minimisation is carried out over the decision variables $t^{(i)}$
and $\xi^{[i]} = [\xi^{(i_+)}]_{i_+\in\child(i)}$.

The vector $\Phi_{j}$ in \eqref{eq:phi:nElse} comprises the elements
$\Phi^{(i)}$, $\stage(i) = j$.
For \ac{avar}, as described in \eqref{eq:avarEpigraphRelaxation:node:long} they are
\begin{equation}\label{eq:randomVariablePsiJ} 
	\Phi^{(i)}
	 =
 	\rho^{(i)}(Z^{[i]} + \Phi^{[i]})
	 =
	\min_{
		\subalign{
		\xi^{[i]} &\geq{} 0\\
			\alpha \xi^{[i]} &\geq{} Z^{[i]} {}+{} \Phi^{[i]} {}-{} t^{(i)}\One_{|\child(i)|} \hspace{-7.5em}
		}}
		{\Psi^{(i)}}\hspace{6.5em}
\end{equation}

In light of \eqref{eq:phi_0_total_cost}, the above recursive procedure leads to the formulation of the following optimisation problem.
\begin{problem}\label{prob:riskAverseMpc:Phi}
Solve the optimisation problem
\[
	\Minimize_{{\boldsymbol{v}, \boldsymbol{x}, \boldsymbol{q}}, \boldsymbol{t}, \boldsymbol{\xi}}\
	\Phi^{(0)}
\]
$\subjectto$
\begin{align*}
	& \phantom{\alpha} \xi^{[i]} \geq 0, \\
	& \alpha \xi^{[i]} \geq Z^{[i]} - t^{(i)}\One_{|\child(i)|}, {}\text{ if } \stage(i) = N-1, \\
	& \alpha \xi^{[i]} \geq Z^{[i]} + \Phi^{[i]} - t^{(i)}\One_{|\child(i)|},  {}\text{ if } \stage(i) < N-1, \\
 	& \text{constraints \eqref{eq:forecastAndScenarioTree:constraints} and given initial conditions } x^{(0)}, \delta_{\mathrm{t}}^{(0_-)} \\
	& \forall i_+\in\N_{[1,\mu-1]} \text{ and }  i = \ancestor(i_+).
\end{align*}
\end{problem}

In \Cref{prob:riskAverseMpc:Phi}, we substitute $\Phi^{(0)}$ with $\Psi^{(0)}$ in the cost
function.
Furthermore, because of \Cref{lemma:auxiliary} we can replace the constraint
$\alpha \xi^{[i]} \geq Z^{[i]} + \Phi^{[i]} - t^{(i)}\One_{|\child(i)|}$
by the constraint $\alpha \xi^{[i]} \geq Z^{[i]} + \Psi^{[i]} - t^{(i)}\One_{|\child(i)|}$.
This leads to \cref{prob:riskAverseMpc} completing the proof.
\end{proof}
\end{theorem}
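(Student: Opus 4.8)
The plan is to peel the nested risk measure $\varrho_N$ from the inside out and to replace each conditional average value-at-risk by its epigraphical relaxation from \cref{prop:avar}. First I would introduce, mirroring the recursion $\Phi_{N-1}=\rho_{N-1}(Z_N)$ and $\Phi_j=\rho_j(Z_{j+1}+\Phi_{j+1})$ for $j\in\N_{[0,N-2]}$, an auxiliary vector $\Phi$ indexed over the non-leaf nodes and segmented stage-wise as $\Phi_j=[\Phi^{(i)}]_{i\in\nodes(j)}$. A straightforward telescoping of the definition \eqref{eq:varRho} then shows that $\varrho_N(Z_1,\ldots,Z_N)$ collapses to the scalar $\Phi^{(0)}$ attached to the root (recall $\nodes(0)=\{0\}$). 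This step is purely definitional and exact.

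Next I would invoke \cref{prop:avar}, in the node-wise form \eqref{eq:avarEpigraphRelaxation:node:long}, to rewrite every component $\Phi^{(i)}$ as a minimisation of $\Psi^{(i)}$ over node-local auxiliary variables: for $\stage(i)=N-1$ the constraint is $\xi^{[i]}\geq 0$ together with $\alpha\xi^{[i]}\geq Z^{[i]}-t^{(i)}\One_{|\child(i)|}$, and for $\stage(i)<N-1$ it is $\xi^{[i]}\geq 0$ together with $\alpha\xi^{[i]}\geq Z^{[i]}+\Phi^{[i]}-t^{(i)}\One_{|\child(i)|}$. Because the $t^{(i)}$ and $\xi^{(i_+)}$ attached to different nodes are distinct variables, the expression for $\Phi^{(0)}$ is a composition of nested minimisations over independent blocks of variables, alongside the tree-dynamics variables $\boldsymbol v,\boldsymbol x,\boldsymbol q$ over which \cref{prob:riskAverseMpc:nestedConditionalRiskMappings} already minimises. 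Collapsing these nested minimisations into one joint minimisation (the elementary identity $\min_a\min_b f=\min_{a,b}f$) yields an intermediate problem whose objective is $\Phi^{(0)}$, whose constraints are the ones just displayed—still carrying $\Phi^{[i]}$ on their right-hand sides—together with the scenario-tree constraints \eqref{eq:forecastAndScenarioTree:constraints}.

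It then remains to substitute $\Psi^{(0)}$ for $\Phi^{(0)}$ in the cost and $\Psi^{[i]}$ for $\Phi^{[i]}$ in every constraint, turning the intermediate problem into \cref{prob:riskAverseMpc} with no change of optimal value. The cost substitution is immediate: $\Phi^{(0)}$ is itself the minimum of $\Psi^{(0)}$ over variables that are already decision variables, so minimising $\Phi^{(0)}$ coincides with minimising $\Psi^{(0)}$. The constraint substitution is the delicate point, and I would settle it by appealing to \cref{lemma:auxiliary}. A priori the tightened constraint $\alpha\xi^{[i]}\geq Z^{[i]}+\Psi^{[i]}-t^{(i)}\One_{|\child(i)|}$ only shrinks the feasible set, since $\Psi^{(i_+)}\geq\Phi^{(i_+)}$ pointwise; the claim is that the optimal value is nonetheless unchanged. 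The reasoning is monotonicity: by construction the minimised value $\Phi^{(i)}=\rho^{(i)}(Z^{[i]}+\Phi^{[i]})$ is nondecreasing in each entry of $\Phi^{[i]}$ (monotonicity of the coherent risk measure $\rho^{(i)}$), so starting from any optimal point one may proceed from the leaves towards the root and re-optimise the node-local variables $t^{(i)},\xi^{[i]}$ so that $\Psi^{(i)}$ attains its minimum $\Phi^{(i)}$ without raising the value of any ancestor's expression, in particular without raising $\Psi^{(0)}$; once $\Psi^{(i_+)}=\Phi^{(i_+)}$ at every child, the tightened constraint agrees with the original one. I expect this monotone leaf-to-root relaxation—which I anticipate \cref{lemma:auxiliary} to encapsulate—to be the only non-routine ingredient; everything else is bookkeeping on the scenario tree.
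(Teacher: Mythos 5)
Your proposal follows the paper's proof essentially step for step: the same recursion $\Phi_{N-1}=\rho_{N-1}(Z_N)$, $\Phi_j=\rho_j(Z_{j+1}+\Phi_{j+1})$ collapsing $\varrho_N$ to $\Phi^{(0)}$, the same node-wise epigraphical rewriting via \eqref{eq:avarEpigraphRelaxation:node:long}, the same intermediate problem carrying $\Phi^{[i]}$ in its constraints, and the same final substitution of $\Psi^{(0)}$ for $\Phi^{(0)}$ and $\Psi^{[i]}$ for $\Phi^{[i]}$. Your monotone leaf-to-root re-optimisation argument for that last substitution is a correct, hands-on version of what the paper delegates to \Cref{lemma:auxiliary}, which states more generically that a constraint of the form $\min_{y\in\mathbb{Y}}f(x,y)\leq\beta$ can be replaced by $f(x,y)\leq\beta$ with $y$ promoted to a decision variable, applied recursively across the stages.
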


Since the operating cost of \Cref{sec:operatingCosts} involves quadratic functions and binary variables, \Cref{prob:riskAverseMpc} is an \ac{miqcqp}.
As we will demonstrate in \Cref{sec:caseStudy}, this can be solved efficiently by standard software such as CPLEX or Gurobi.

\subsection{Risk-averse MPC}
\label{sec:riskAverseMpcScheme}
The risk-averse optimal control problem is solved
given the current measured state of the system
and a scenario tree that describes the distribution of
future available renewable infeed and demand over $N$ stages.
By solving \Cref{prob:riskAverseMpc} at every time instant $k$ we obtain a set of
control actions $v^{(i)}$ for ${i\in \N_{[0,\mu-1]} \setminus \nodes(N)}$
(see~\Cref{fig:probabilitytreeUnsymmetricVariables}).
Then, the control action associated with the root node of the tree, $v^{(0)}$,
is applied to the system.
This procedure is repeated at every time instant $k\in\N_{0}$ in a receding horizon fashion leading to a risk-averse \acf{mpc} scheme \cite{BM1999}.


\section{Case study}
\label{sec:caseStudy}

In this case study, we aim to demonstrate the properties of the risk-averse \acl{mpc} strategy introduced in \Cref{sec:riskAverseMpcScheme}.
For the simulations, the \ac{mg} in \Cref{fig:examplaryMicrogrid} is used.
It comprises a storage, a conventional and a renewable unit with the parameters from \Cref{tab:caseStudy:ModelParameters} as well as a load.
The units and the load are connected by transmission lines that all have susceptance $b_{ij} = \unit[-20]{pu}$ and conductance $g_{ij} = \unit[2]{pu}$.
Hence, \eqref{eq:nodePowerToLinePower} becomes
\[
	\begin{bmatrix}
		p_{\mathrm{e},1}(k) \\
		p_{\mathrm{e},2}(k) \\
		p_{\mathrm{e},3}(k) \\
		p_{\mathrm{e},4}(k)
	\end{bmatrix}
	=
	\underbrace{%
		\begin{bmatrix}
			1 & 0 & 0 & 0\\
			0 & \nicefrac{-1}{3} & \nicefrac{1}{3} & 0\\
			0 & \nicefrac{2}{3} & \nicefrac{1}{3} & 0\\
			0 & \nicefrac{1}{3} & \nicefrac{2}{3} & 0
		\end{bmatrix}%
	}_{F}
	\begin{bmatrix}
		p_{\mathrm{t}}(k) \\
		p_{\mathrm{s}}(k) \\
		p_{\mathrm{r}}(k) \\
		w_{\mathrm{d}}(k)
	\end{bmatrix}.
\]
Each line can transmit power between $\unit[-1.3]{pu}$ and $\unit[1.3]{pu}$.
Note that all values are given in per-unit ($\unit{pu}$), see, e.g., \cite{KBL1994}.

All simulations were performed in MATLAB~2015a using the closed-loop setup shown in \Cref{fig:systemOverview}.
In what follows, we will discuss the different parts of it, i.e., forecast, scenario reduction, \ac{mpc} and the \ac{mg} plant model.

\begin{figure}
	\centering
	\includegraphics{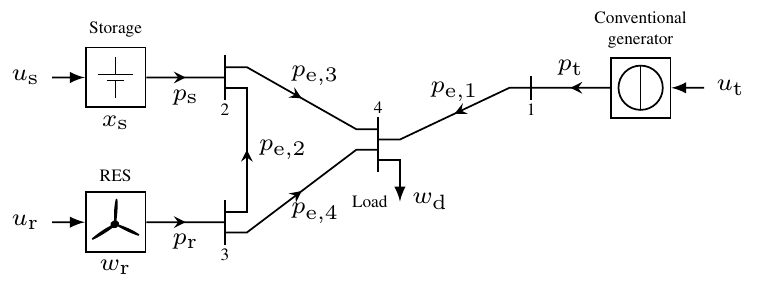}
	\caption{\ac{mg} considered in the simulation case study with storage, renewable and conventional unit as well as load.
	Each unit is connected to a bus, which is connected with other buses by transmission lines.}
	\label{fig:examplaryMicrogrid}
\end{figure}

The wind speed and load forecasts were obtained with the seasonal \ac{arima} models from \cite{HSB+2015}.
These were implemented using MATLAB's Econometrics toolbox.
The available power of the \ac{res}, i.e., of the wind turbine, was calculated from the wind speed forecast using the cubic approximation from~\cite{HSB+2015}.

The scenario tree was generated from a collection of $500$ independent scenarios of load and available renewable power using \cite[Algorithm~5]{OPG+2015}.
The maximum number of children per node was chosen to be $6$ for the root node, $2$ for the nodes of stage $1$ and $1$ for all other stages.
The tree was modified to include \acl{help} scenarios corresponding to the cases of very low available renewable power and very high load and vice versa.
Hence, the scenario tree contains $6 \cdot 2 + 2 = 14$ scenarios.
Because of their very low probability, \acl{help} scenarios have minor influence on the stochastic case at $\alpha = 1$.
However, for smaller values of $\alpha$ that are close to zero, i.e., as we approach the worst-case, their influence increases significantly as will be illustrated later.

\begin{table}
	\centering
	\newcommand{\cIn}[2]{\ensuremath{c_{\mathrm{#1}}^{#2}}}
	\newcommand{\cS}[1]{\cIn{s}{#1}}
	\newcommand{\cT}[1]{\cIn{t}{#1}}
	\newcommand{\cR}[1]{\cIn{r}{#1}}
	\caption{Unit parameters and weights of cost function.}
	\label{tab:caseStudy:ModelParameters}
	\begin{tabular}{clccl}
		\toprule
		Parameter & Value & & Weight & Value\\
		\cmidrule{1-2} \cmidrule{4-5}
		$[{p}_{\mathrm{t}}^{\min}, {p}_{\mathrm{r}}^{\min}, {p}_{\mathrm{s}}^{\min}]$ & $[0.4, 0, -1]\,\unit{pu}$ & & $\cT{}$ & $\unit[0.1178]{}$\\[1pt]
		$[{p}_{\mathrm{t}}^{\max}, {p}_{\mathrm{r}}^{\max}, {p}_{\mathrm{s}}^{\max}]$ & $[1, 2, 1]\,\unit{pu}$ & & $\cT{\prime}$ & $\unit[0.751]{\nicefrac{1}{pu}}$\\[1pt]
		$[{x}^{\min}, {x}^{\max}]$ & $[0, 7]\,\unit{pu\,h}$ & & $\cT{\prime\prime}$ & $\unit[0.0693]{\nicefrac{1}{pu}}$ \\[1pt]
		$[\tilde{x}^{\min}, \tilde{x}^{\max}]$ & $[0.5, 6.5]\,\unit{pu\,h}$ & & $\cR{}$ & $\unit[1]{\nicefrac{1}{pu}}$\\[1pt]
		$x^0$ & $\unit[3]{pu\,h}$ & & $\cS{}$ & $\unit[3 \cdot 10^3]{\nicefrac{1}{pu\,h}}$ \\[1pt]
		$[{K}_\mathrm{t}, {K}_\mathrm{s}]$ & $[1, 1]$ & & $\cT{\mathrm{sw}}$ & $\unit[0.1]{}$ \\
		\bottomrule
	\end{tabular}
\end{table}

For \ac{mpc}, a prediction horizon of $N = 8$, a sampling time of $\samplingTime=\unit[\nicefrac{1}{2}]{h}$ and a discount factor of $\gamma = 0.95$ were chosen.
The different controllers were implemented using YALMIP~R20180612~\cite{Lof2004} and Gurobi~7.5.2 as a numerical solver.
To speed up the computations, the results from the previous iteration were used as initial values to warm-start the optimisation.
Furthermore, the binary switch state of the conventional unit was relaxed for all stages greater than $3$, i.e., $\delta_\mathrm{t}^{(i)}\in [0,1]^T$ for $\stage(i) \geq 4$ in the risk-averse \ac{mpc}.
The simulations in \Cref{sec:nominalSimulations} were performed on a computer with an Intel$^\text{\textregistered}$ Xeon$^\text{\textregistered}$ E5-1620 v2 processor @\unit[3.70]{GHz} with \unit[32]{GB}\,RAM.
Here, the maximum solve time of Gurobi (excluding the time required by YALMIP to parse the problem) was below $\unit[7]{s}$ (see \Cref{tab:CaseStudyComparison}).
Considering a sampling time of $\samplingTime = \unit[\nicefrac{1}{2}]{h}$ (see, e.g., \cite{pbraun2016islanding}), this is adequately~fast.

In the plant model, the efficiency of the storage unit and an \ac{ac} power flow model were considered to obtain more realistic simulation results.
For the storage unit, the model described in \Cref{rem:mgStorageModel} with charging and discharging efficiency ${\eta^{\mathrm{c}} = \eta^{\mathrm{d}} = 0.92}$ and self discharge $x^{\mathrm{sd}} = \unit[2\cdot10^{-3}]{pu\,h}$ was used.
For \ac{ac} power flow, the model from \Cref{rem:mgPowerFlowModel} was used employing the MATLAB function fmincon.
Here, the line parameters posed earlier and voltages $v_i = \unit[1]{pu}$, $i \in \N_{[1,4]}$ were assumed.
Note that this model was only used to simulate the \ac{mg} plant and not in the \ac{mpc} problem formulations.

\begin{subequations}
To compare the outcome of the different simulations we introduce the average economically motivated cost
\begin{equation}\label{eq:averageOperatingCosts}
	\bar{\ell}_{\mathrm{o}} = \textstyle\frac{1}{K}\sum_{k=1}^{K} \ell_{\mathrm{o}}( v_s(k-1), v(k), z(k)).
\end{equation}
over a simulation horizon $K$ with $\ell_{\mathrm{o}}$ from \eqref{eq:operatingCosts:economic}.
Using \eqref{eq:softConstraints:cost}, we also introduce the average cost related to the stored energy,
\begin{equation}\label{eq:averageStorageRelatedCosts}
	\bar{\ell}_{\mathrm{s}} = \textstyle\frac{1}{K}\sum_{k=1}^{K} \ell_{\mathrm{s}}(x(k)).
\end{equation}
\end{subequations}

Having described the simulation setup and the average costs, we can now discuss the simulation results.
Here, we first provide a comparison of different controllers for a nominal simulation run.
Then, we illustrate the properties of the risk-averse \ac{mpc} in more detail in a sensitivity analysis.

\subsection{Nominal simulations}
\label{sec:nominalSimulations}

\begin{figure*}[tb]
	\centering
	\includegraphics{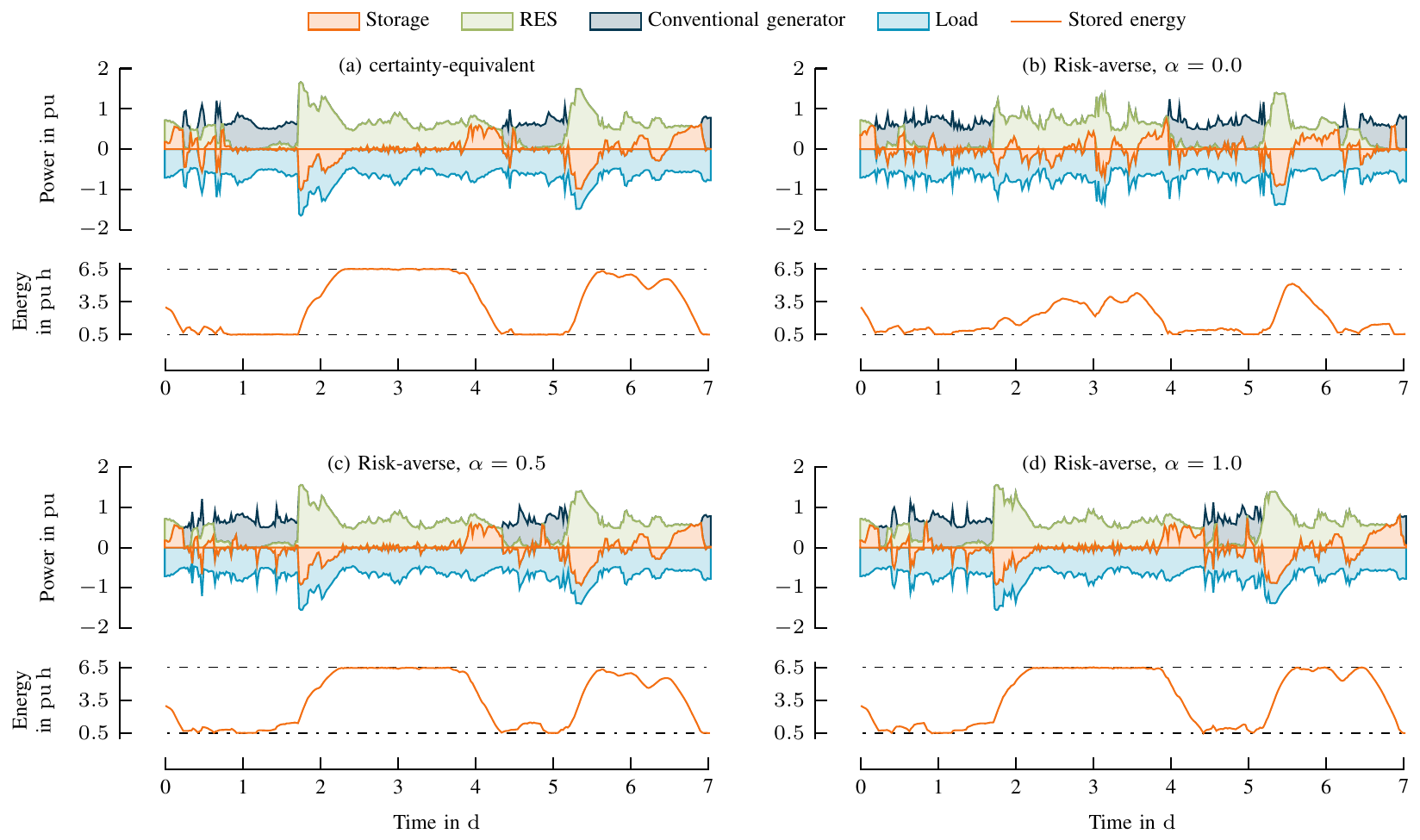}
	\caption{Power and energy of the \ac{mg} using different controllers over one week of simulation with a sampling time of \unit[\nicefrac{1}{2}]{h}.}
	\label{fig:caseStudyResults}
\end{figure*}

\begin{table}[t]
	\renewcommand*{\thefootnote}{\alph{footnote}}
	\centering
	\caption{Running costs, conventional and renewable infeed of closed-loop simulation with simulation horizon $K = 336$.}
	\label{tab:CaseStudyComparison}
	\begin{threeparttable}
	\newcommand{\csvDispColumn}[1]{%
		\csvreader[head to column names, separator=comma, filter equal={\thecsvinputline}{2}]%
			{simulationResults.csv}{}{#1} &
		\csvreader[head to column names, separator=comma, filter equal={\thecsvinputline}{3}]%
			{simulationResults.csv}{}{#1} &
		\csvreader[head to column names, separator=comma, filter equal={\thecsvinputline}{4}]%
			{simulationResults.csv}{}{#1} &
		\csvreader[head to column names, separator=comma, filter equal={\thecsvinputline}{5}]%
			{simulationResults.csv}{}{#1}
		}
	\begin{tabular}{rd{0.0}d{0.0}d{0.0}d{0.0}}
		\toprule
			& \noDecimal{\multirow{3}{1.08cm}{\centering \vspace{-3mm} Certainty-equival.}} & \multicolumn{3}{c}{Risk-averse, $\alpha=$} \\
										\cmidrule{3-5}
			&  & \noDecimal{\parbox{5mm}{\centering $0.0$\tnote{a}}} & \noDecimal{\parbox{5mm}{\centering $0.5$}} & \noDecimal{\parbox{5mm}{\centering $1.0$\tnote{b}}} \\
			\midrule
			Avg. power costs $\bar{\ell}_{\operatorname{o}}$ & \csvDispColumn{\costs} \\
			Avg. energy cost $\bar{\ell}_{\operatorname{s}}$ & \csvDispColumn{\softConstCosts} \\
			Avg. conventional infeed in \unit{pu}& \csvDispColumn{\thermal} \\
			Avg. infeed of RES in \unit{pu} & \csvDispColumn{\renewable} \\
			\midrule
			Constraint violations power & \csvDispColumn{\errPower} \\
			Switching actions & \csvDispColumn{\switchingActions} \\
			\midrule
			Avg. solve time in \unit{s} & \csvDispColumn{\meanSolverTime} \\
			Maximum solve time in \unit{s} & \csvDispColumn{\maxSolverTime} \\
		\bottomrule
	\end{tabular}
	\begin{tablenotes}
		\item[a] Corresponds to worst-case \ac{mpc}.
		\item[b] Corresponds to risk-neutral stochastic (expectation-based) \ac{mpc}.%
	\end{tablenotes}%
	\end{threeparttable}%
\end{table}%

In what follows, the risk-averse \ac{mpc} approach for different values of $\alpha$ and a certainty-equivalent \ac{mpc} approach where the mean value of the forecast is considered as the true value \cite{HNRR2014} are compared in closed-loop simulations.
As shown in \Cref{tab:CaseStudyComparison}, the certainty-equivalent approach leads to power constraint violations.
These are caused by the discrepancy between data-based forecast and actual uncertain input, as well as the mismatch between the model in \ac{mpc} and in the plant simulation and render the approach unsuitable for a safe operation control.
Therefore, it is not discussed further.

For the risk-averse approaches, it can be seen in \Cref{fig:caseStudyResults} that with increasing $\alpha$, energy is stored faster.
Furthermore, the mean infeed from \ac{res} increases and the infeed of the conventional unit decreases (see~\Cref{tab:CaseStudyComparison}).
These two effects contribute significantly to a decrease of the cost $\bar{\ell}_{\mathrm{o}}$.
Thus, with $\alpha = 1$, the cost $\bar{\ell}_{\mathrm{o}}$ is reduced by \unit[14]{\%} compared to $\alpha = 0$.

As the approach becomes more averse to risk with decreasing $\alpha$, the average cost associated with the stored energy, $\bar{\ell}_{\mathrm{s}}$, decreases.
Two effects drive this decrease:
(i)~the handling of \ac{help} scenarios in \ac{mpc} and
(ii)~the mismatch between the \ac{mg} model in the plant simulation and that in \ac{mpc}.
However, simulations where the same model is used for the \ac{mg} plant and \ac{mpc} indicate that effect~(i) plays the dominant role.

\ac{help} scenarios represent extreme combinations of forecast values which are very unlikely to happen.
A misestimation of their probability can have a strong impact on the closed-loop performance.
Risk-neutral stochastic \ac{mpc} ($\alpha=1$) is agnostic to such misestimations and can therefore not act proactively against misestimated \ac{help} events.
In closed-loop simulations, this is reflected by higher energy related costs, $\bar{\ell}_{\mathrm{s}}$, as probability distributions that do not follow the scenario tree are not considered.
With decreasing $\alpha$, ambiguity in the probability distribution is taken more into account leading to \ac{mpc} formulations that account for misestimated probabilities of unlikely events.
This leads to an increasing importance of unlikely scenarios where the energy is outside the interval $[\tilde{x}^{\min}, \tilde{x}^{\max}]$ and
results in control actions where more energy values are inside this interval leading to lower values of $\bar{\ell}_{\operatorname{s}}$.

A comparison of the overall average cost, $\bar{\ell} = \bar{\ell}_{\mathrm{o}} + \bar{\ell}_{\mathrm{s}}$, shows that the lowest value $\bar{\ell} = 2.99$ is achieved for $\alpha = 0.5$.
This is about \unit[6]{\%} lower than the overall cost for $\alpha = 0$, $\bar{\ell} = 3.2$, and about \unit[26]{\%} lower than the overall cost for $\alpha = 1$, $\bar{\ell} = 4.09$.

The nominal simulations show that the risk-averse \ac{mpc} together with an appropriate construction of the scenario tree leads to suitable operation strategies for islanded \acp{mg}.
In practice, this translates into a desirable trade-off between performance and robustness to uncertain probability distributions.

\subsection{Sensitivity analysis}

To illustrate the performance of the risk-averse approach in presence of inaccurate forecasts, a sensitivity analysis was carried out.
In the analysis, $1000$ closed-loop simulations were performed for the first three days, i.e., $K = 144$ simulation steps, of the scenario shown in \Cref{fig:caseStudyResults}.
To demonstrate the robustness of the risk-averse approach to uncertainty in the probability distribution, we added noise to the uncertain input of the \ac{mg} plant model.
This way, occasional \ac{help} events are artificially added to illustrate the positive effects of the risk-averse \ac{mpc} approach.
In the following analysis, we consider two different probability distributions of the additional noise:
(i)~constant offset in the mean value, and
(ii)~occasional offset that randomly occurs in \unit[10]{\%} of the simulation steps.

\subsubsection{Disturbance with constant offset}
\label{sec:disturbanceWithConstantOffset}

Here, a Gaussian noise term with nonzero mean and standard deviation equal to that of the \ac{arima} forecast training residuals is added to the wind speed and load time series.
In particular, Gaussian noise with mean $\unit[0.048]{pu}$ and standard deviation $\unit[0.032]{pu}$ was added to the load and Gaussian noise with mean $\unitfrac[-0.795]{m}{s}$ and standard deviation $\unitfrac[0.53]{m}{s}$ was added to the wind speed before the available wind power, $w_{\operatorname{r}}$, was obtained.
The different scenarios of wind and load are shown in \Cref{fig:windAndLoadMeanOffset}.

\begin{figure}[tb]
	\centering
	\includegraphics{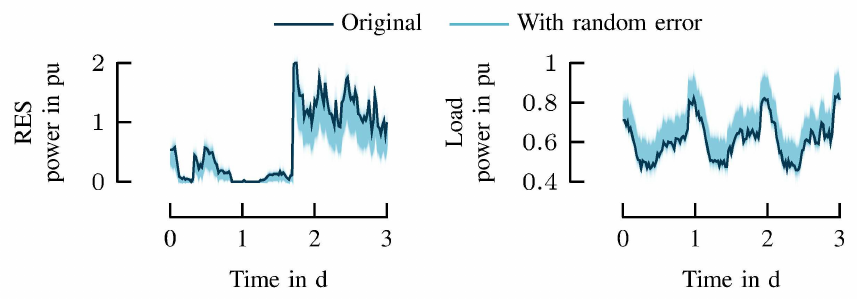}
	\caption{1000 scenarios of wind and load data used in sensitivity analysis with constant offset of 1.5 times the standard deviation.}
	\label{fig:windAndLoadMeanOffset}
\end{figure}

\begin{figure}[tb]
	\centering
	\includegraphics{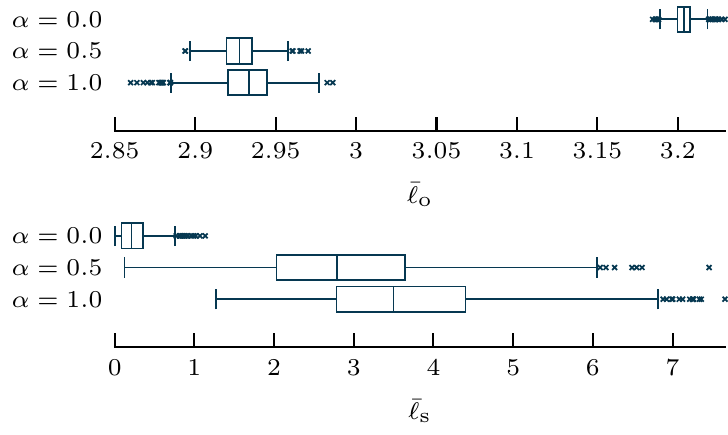}
	\caption{Results of sensitivity analysis with systematic error, i.e., constant offset in the mean value of the additional disturbance.}
	\label{fig:boxplotObjectiveConstantErrors}
\end{figure}

The resulting distribution of $\bar{\ell}_{\mathrm{o}}$ is illustrated in \Cref{fig:boxplotObjectiveConstantErrors}.
It can be observed that the mean of the economically motivated costs first decrease from $\alpha = 1$ to $\alpha = 0.5$ by \unit[0.14]{\%}.
Furthermore, the standard deviation of $\bar{\ell}_{\mathrm{o}}$ significantly decreases from $0.019$ for $\alpha = 1$ to $0.012$ for $\alpha = 0.5$.
Then, for $\alpha = 0$ the mean of $\bar{\ell}_{\mathrm{o}}$ increases by \unit[9.3]{\%} due to the increased conservativeness of the robust \ac{mpc} approach.
This shows that choosing $\alpha < 1$ can protect the system from \ac{help} events, i.e., prevent extreme costs, and even reduce closed-loop costs.
This is also reflected in the lower standard deviation of cost for smaller values of $\alpha$.
Furthermore, the energy related costs $\bar{\ell}_{\mathrm{s}}$ decrease for smaller values of $\alpha$.
Thus, by choosing $\alpha$ appropriately, the costs and the conservativeness of the control scheme can be tuned.
This adds an important degree of freedom to the traditional design procedures of worst-case ($\alpha = 0$) and stochastic ($\alpha = 1$) approaches.

\subsubsection{Disturbance with occasional extreme events}

Here, the mean value of the additional noise was chosen different from zero for only $\unit[10]{\%}$ of the data points to model occasional extreme events.
For the other data points, the mean of the additional noise was set to zero.
The random nonzero offset in \unit[10]{\%} of the cases was $\unit[0.096]{pu}$ for load and $\unitfrac[1.589]{m}{s}$ for wind speed.
A standard deviation of $\unit[0.032]{pu}$ for load and $\unitfrac[0.53]{m}{s}$ for wind speed was considered for all cases based on the training residuals of the \ac{arima} forecast models.
This led to the scenarios shown in \Cref{fig:windAndLoadOccExtremeEvents}.

\begin{figure}[t]
	\centering
	\includegraphics{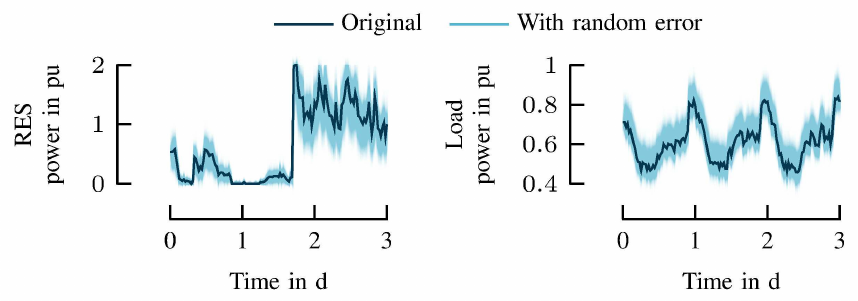}
	\caption{1000 scenarios of wind and load data used in sensitivity analysis with occasional extreme events in \unit[10]{\%} of the cases.}
	\label{fig:windAndLoadOccExtremeEvents}
\end{figure}

\begin{figure}[t]
	\centering
	\includegraphics{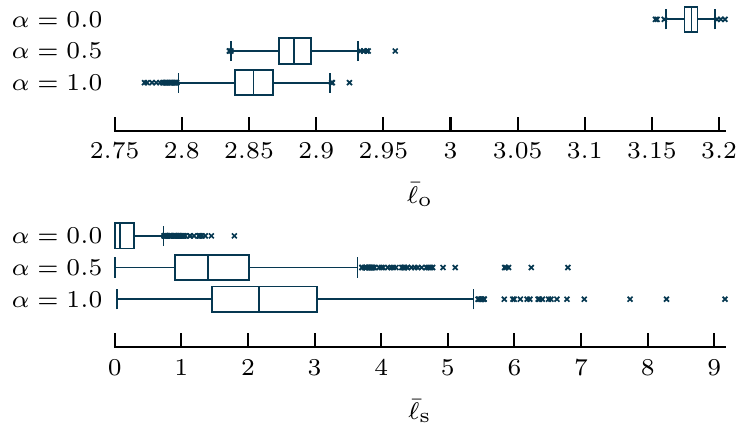}
	\caption{Results of sensitivity analysis with occasional extreme events in \unit[10]{\%} of the cases.}
	\label{fig:boxplotObjectiveExtremeErrors}
\end{figure}

As shown in \Cref{fig:boxplotObjectiveExtremeErrors}, the energy related costs $\bar{\ell}_{\mathrm{s}}$ decreases for lower values of $\alpha$, yet, lower economically motivated costs $\bar{\ell}_{\mathrm{o}}$, are not observed as~$\alpha$ decreases.
However, the standard deviation of $\bar{\ell}_{\mathrm{o}}$ decreases from $0.024$ for $\alpha = 1$ to $0.007$ for $\alpha = 0$, indicating that the average cost for operating the grid becomes less sensitive to inaccurate probability distributions with decreasing $\alpha$.
This shows that the importance of \ac{help} events can be explicitly parametrised by adapting $\alpha$.
Additionally, for a given \ac{mg} setup, the designer of the \ac{mpc} has a tuning knob to strike a suitable trade-off between the economically motivated $\bar{\ell}_{\mathrm{o}}$ and the state related $\bar{\ell}_{\mathrm{s}}$.


\section{Conclusions}
\label{sec:Conclusions}

In this work we presented a risk-averse \ac{mpc} strategy for islanded \acp{mg} with very high share of \ac{res} which enables us to trade economic performance for safety by interpolating between worst-case and risk-neutral stochastic formulations.
The approach provides resilience with respect to misestimations of the underlying probability distributions of demand and available renewable infeed.
Therefore, it is suitable for practical implementations, where these distributions are not known exactly or change over time.
It also allows for the use of simple and therefore computationally less expensive scenario trees at the expense of operating with a slightly more conservative regime.
Furthermore, the presented \ac{mpc} scheme is able to protect \acp{mg} against \acl{help} events such as sudden drops of available renewable power or unexpected increase of demand.
Finally, the proposed risk-averse \ac{mpc} formulation can be cast as an \ac{miqcqp} which can be solved by commercial solvers as indicated in \Cref{sec:riskAverseMpc}.

For a large number of integer variables the problem complexity can potentially become prohibitive.
In the future we would like to look into this topic in more detail by considering \acp{mg} with more conventional and renewable units as well as scenario trees with a higher number of nodes.
Furthermore, as the operation regime is significantly influenced by the state of charge, we plan to consider more complex storage dynamics.
Future work will also address chance constraints and decreasing the solver time by devising paralleliseable optimisation algorithms (see, e.g, \cite{SSBP2017}) that can run on graphics processing units.
Also, we plan to study statistically meaningful ways to choose the level of risk aversion (see, e.g.,~\cite{SSP2019}).

\appendices

\section{Auxiliary Results}

\begin{lemma}\label[lemma]{lemma:auxiliary}
 Let $\emptyset{}\neq{}\mathbb{X}\subseteq\R^n$, $\emptyset{}\neq{}\mathbb{Y}{}\subseteq\R^m$ and for every $x\in{}\mathbb{X}$,
 $f(x,y)$ attains a minimum over $\mathbb{Y}$, i.e., $\min_{y\in\mathbb{Y}}f(x,y)$ exists.
 Then, the optimisation problem
 \begin{align*}
	\Minimize_{x\in \mathbb{X}, v} F(x,v) \quad & \subjectto \quad \min_{y\in \mathbb{Y}} f(x,y) \leq \beta, \\
	\intertext{with cost function $F:\R^n\times\R^m \rightarrow \R$ is equivalent to}
  	\Minimize_{x \in \mathbb{X},v} F(x,v) \quad & \subjectto \quad y\in \mathbb{Y},~f(x,y) \leq \beta.
 \end{align*}
\end{lemma}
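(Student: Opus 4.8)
The plan is to reduce both optimisation problems to a minimisation over the $x$-variable alone and to show that the two feasible sets of admissible $x$-values coincide. Since the objective $F(x,v)$ does not depend on $y$, the inner minimisation over $y$ in the first problem and the extra decision variable $y$ in the second problem affect only \emph{feasibility}, not the attained objective value; hence it suffices to compare the feasibility regions for $x$ in the two problems.

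Concretely, I would introduce
\[
 \mathbb{X}_1 = \{x\in\mathbb{X} : \min_{y\in\mathbb{Y}} f(x,y) \leq \beta \},
 \qquad
 \mathbb{X}_2 = \{x\in\mathbb{X} : \exists\, y\in\mathbb{Y}\text{ such that } f(x,y)\leq\beta \},
\]
so that the optimal value of the first problem equals $\inf_{x\in\mathbb{X}_1}\inf_{v}F(x,v)$ and that of the second equals $\inf_{x\in\mathbb{X}_2}\inf_{v}F(x,v)$; indeed, in the second problem, for a fixed feasible $x$ any admissible $y$ may be discarded from the objective while $v$ ranges freely. I would then establish $\mathbb{X}_1=\mathbb{X}_2$ by two inclusions. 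If $x\in\mathbb{X}_1$, the standing hypothesis guarantees that $\min_{y\in\mathbb{Y}} f(x,y)$ is attained at some $y^*\in\mathbb{Y}$, and then $f(x,y^*)=\min_{y\in\mathbb{Y}} f(x,y)\leq\beta$, so $x\in\mathbb{X}_2$. Conversely, if $x\in\mathbb{X}_2$, choose $y\in\mathbb{Y}$ with $f(x,y)\leq\beta$; then $\min_{y'\in\mathbb{Y}} f(x,y')\leq f(x,y)\leq\beta$, so $x\in\mathbb{X}_1$.

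Since $\mathbb{X}_1=\mathbb{X}_2$, the two problems share the same feasible set of $x$-values and the same objective, hence the same optimal value; moreover a feasible point of either problem induces a feasible point of the other at equal cost (append $y^*$ in one direction, drop $y$ in the other), so the correspondence also holds at the level of minimisers whenever these exist. The only genuine subtlety — and the sole place where the hypothesis that $f(x,\cdot)$ attains its minimum over $\mathbb{Y}$ is used — is the inclusion $\mathbb{X}_1\subseteq\mathbb{X}_2$: without attainment one could have $\inf_{y\in\mathbb{Y}} f(x,y)=\beta$ with no admissible $y$ actually achieving it, which would break the equivalence. Everything else is routine bookkeeping of which variables are held fixed and which are free.
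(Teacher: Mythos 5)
Your proposal is correct and follows essentially the same route as the paper: both reduce the claim to showing that the two feasible sets of $x$-values coincide, using the attainment hypothesis for the inclusion corresponding to the min-constrained problem and monotonicity of the minimum for the converse. Your additional remarks on the role of $v$ and on where attainment is genuinely needed are accurate but do not change the argument.
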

\begin{proof}
 As the two problems have the same cost function, it suffices to show that
 they have the same constraint sets.
 Therefore, we define
 \(
      \mathbb{S}
  {}={}
      \big\{x\in\R^n {}\mid{} \min_{y\in \mathbb{Y}}f(x,y)\leq \beta\big\}
 \)
 and
 \(
      \mathbb{S}'
  {}={}
      \{x \in\mathbb{R}^n {}\mid{} \exists y \in \mathbb{Y} \text{ such that } f(x,y)\leq \beta\}
 \).

	Take $x\in \mathbb{S}$, i.e., $\min_{y\in \mathbb{Y}}f(x,y)\leq \beta$.
 	Since the minimum exists, there is a $y^\star\in \mathbb{Y}$ such that $f(x,y^\star)\leq \beta$.
 	Hence, $x\in \mathbb{S}'$ and consequently $\mathbb{S}\subseteq \mathbb{S}'$.

	Take $x\in \mathbb{S}'$, i.e., there is a $y_0\in \mathbb{Y}$ such that $f(x,y_0)\leq \beta$.
	Then, $x\in \mathbb{S}$ because $\min_{y\in \mathbb{\mathbb{Y}}}f(x, y) \leq f(x,y_0) \leq \beta$,
	and consequently $\mathbb{S}'\subseteq \mathbb{S}$.
	This proves that $\mathbb{S}'=\mathbb{S}$.
\end{proof}%

\bibliographystyle{IEEEtran}
\bibliography{databasePaper}

\begin{thebibliography}{10}
\providecommand{\url}[1]{#1}
\csname url@samestyle\endcsname
\providecommand{\newblock}{\relax}
\providecommand{\bibinfo}[2]{#2}
\providecommand{\BIBentrySTDinterwordspacing}{\spaceskip=0pt\relax}
\providecommand{\BIBentryALTinterwordstretchfactor}{4}
\providecommand{\BIBentryALTinterwordspacing}{\spaceskip=\fontdimen2\font plus
\BIBentryALTinterwordstretchfactor\fontdimen3\font minus
  \fontdimen4\font\relax}
\providecommand{\BIBforeignlanguage}[2]{{%
\expandafter\ifx\csname l@#1\endcsname\relax
\typeout{** WARNING: IEEEtran.bst: No hyphenation pattern has been}%
\typeout{** loaded for the language `#1'. Using the pattern for}%
\typeout{** the default language instead.}%
\else
\language=\csname l@#1\endcsname
\fi
#2}}
\providecommand{\BIBdecl}{\relax}
\BIBdecl

\bibitem{HAIM2007}
N.~Hatziargyriou, H.~Asano, R.~Iravani, and C.~Marnay, ``Microgrids,''
  \emph{IEEE Power Energy Mag.}, vol.~5, no.~4, pp. 78--94, 2007.

\bibitem{Las2002}
R.~H. Lasseter, ``{MicroGrids},'' in \emph{IEEE PES WM}, 2002, pp. 305--308.

\bibitem{LMM2006}
J.~A. {Pe{\c{c}}as Lopes}, C.~L. Moreira, and A.~G. Madureira, ``Defining
  control strategies for microgrids islanded operation,'' \emph{IEEE Trans.
  Power Syst.}, vol.~21, no.~2, pp. 916--924, 2006.

\bibitem{BD2012}
A.~Bidram and A.~Davoudi, ``Hierarchical structure of microgrids control
  system,'' \emph{IEEE Trans. Smart Grid}, vol.~3, no.~4, pp. 1963--1976, 2012.

\bibitem{KIHD2008}
F.~Katiraei, R.~Iravani, N.~Hatziargyriou, and A.~Dimeas, ``Microgrids
  management,'' \emph{IEEE Power Energy Mag.}, vol.~6, no.~3, pp. 54--65, 2008.

\bibitem{TH2011}
A.~G. Tsikalakis and N.~D. Hatziargyriou, ``Centralized control for optimizing
  microgrids operation,'' in \emph{IEEE PES GM}, 2011, pp. 1--8.

\bibitem{PRG2014}
A.~Parisio, E.~Rikos, and L.~Glielmo, ``A model predictive control approach to
  microgrid operation optimization,'' \emph{IEEE Trans. Control Syst.
  Technol.}, no.~99, 2014.

\bibitem{PBL+2013}
R.~Palma-Behnke, C.~Benavides, F.~Lanas, B.~Severino, L.~Reyes, J.~Llanos, and
  D.~S{\'a}ez, ``A microgrid energy management system based on the rolling
  horizon strategy,'' \emph{IEEE Trans. Smart Grid}, vol.~4, no.~2, pp.
  996--1006, 2013.

\bibitem{OCK2014}
D.~Olivares, C.~Ca\~{n}izares, and M.~Kazerani, ``A centralized energy
  management system for isolated microgrids,'' \emph{IEEE Trans. Smart Grid},
  vol.~5, no.~4, pp. 1864--1875, 2014.

\bibitem{HNRR2014}
C.~A. Hans, V.~Nenchev, J.~Raisch, and C.~Reincke-Collon, ``Minimax model
  predictive operation control of microgrids,'' in \emph{IFAC WC}, 2014, pp.
  10\,287--10\,292.

\bibitem{Prodan2014399}
I.~Prodan and E.~Zio, ``A model predictive control framework for reliable
  microgrid energy management,'' \emph{Int. J. Electr. Power Energy Syst.},
  vol.~61, pp. 399--409, 2014.

\bibitem{HNRR2015}
C.~A. Hans, V.~Nenchev, J.~Raisch, and C.~Reincke-Collon, ``Approximate
  closed-loop minimax model predictive operation control of microgrids,'' in
  \emph{ECC}, 2015, pp. 241--246.

\bibitem{KouLiang+2016}
P.~{Kou}, D.~{Liang}, L.~{Gao}, and F.~{Gao}, ``Stochastic coordination of
  plug-in electric vehicles and wind turbines in microgrid: A model predictive
  control approach,'' \emph{IEEE Trans. Smart Grid}, vol.~7, no.~3, pp.
  1537--1551, 2016.

\bibitem{Gulin+2015}
M.~{Gulin}, J.~{Matuško}, and M.~{Vašak}, ``Stochastic model predictive
  control for optimal economic operation of a residential {DC} microgrid,'' in
  \emph{IEEE ICIT}, March 2015, pp. 505--510.

\bibitem{Cominesi+2018}
S.~{Raimondi Cominesi}, M.~{Farina}, L.~{Giulioni}, B.~{Picasso}, and
  R.~{Scattolini}, ``A two-layer stochastic model predictive control scheme for
  microgrids,'' \emph{IEEE Trans. Control Syst. Technol.}, vol.~26, no.~1, pp.
  1--13, 2018.

\bibitem{HHM+2012}
A.~Hooshmand, M.~H. Poursaeidi, J.~Mohammadpour, H.~A. Malki, and
  K.~Grigoriads, ``Stochastic model predictive control method for microgrid
  management,'' in \emph{IEEE PES ISGT}, 2012, pp. 1--7.

\bibitem{MesbahReview2016}
A.~{Mesbah}, ``Stochastic model predictive control: An overview and
  perspectives for future research,'' \emph{IEEE Control Syst. Mag.}, vol.~36,
  no.~6, pp. 30--44, 2016.

\bibitem{OJM2008}
F.~Oldewurtel, C.~N. Jones, and M.~Morari, ``A tractable approximation of
  chance constrained stochastic {MPC} based on affine disturbance feedback,''
  in \emph{IEEE CDC}, 2008, pp. 4731--4736.

\bibitem{KouLiangGao2018}
P.~{Kou}, D.~{Liang}, and L.~{Gao}, ``Stochastic energy scheduling in
  microgrids considering the uncertainties in both supply and demand,''
  \emph{IEEE Syst. J.}, vol.~12, no.~3, pp. 2589--2600, 2018.

\bibitem{HSB+2015}
C.~A. Hans, P.~Sopasakis, A.~Bemporad, J.~Raisch, and C.~Reincke-Collon,
  ``Scenario-based model predictive operation control of islanded microgrids,''
  in \emph{IEEE CDC}, 2015.

\bibitem{Petrollese201696}
M.~Petrollese, L.~Valverde, D.~Cocco, G.~Cau, and J.~Guerra, ``Real-time
  integration of optimal generation scheduling with {MPC} for the energy
  management of a renewable hydrogen-based microgrid,'' \emph{Appl. Energy},
  vol. 166, pp. 96--106, 2016.

\bibitem{PRG2016}
A.~Parisio, E.~Rikos, and L.~Glielmo, ``Stochastic model predictive control for
  economic/environmental operation management of microgrids: An experimental
  case study,'' \emph{J. Process Control}, vol.~43, pp. 24--37, 2016.

\bibitem{MD2017}
A.~Maulik and D.~Das, ``Optimal operation of droop-controlled islanded
  microgrids,'' \emph{IEEE Trans. Sustain. Energy}, vol.~PP, no.~99, p.~1,
  2017.

\bibitem{HBSJ2016}
B.~Heymann, J.~F. Bonnans, F.~Silva, and G.~Jimenez, ``A stochastic continuous
  time model for microgrid energy management,'' in \emph{ECC}, 2016, pp.
  2084--2089.

\bibitem{Pinson+2009}
P.~Pinson, H.~Madsen, H.~A. Nielsen, G.~Papaefthymiou, and B.~Kl\"{o}ckl,
  ``From probabilistic forecasts to statistical scenarios of short-term wind
  power production,'' \emph{Wind Energy}, vol.~12, no.~1, pp. 51--62, 2009.

\bibitem{HR2003}
H.~Heitsch and W.~R{\"o}misch, ``Scenario reduction algorithms in stochastic
  programming,'' \emph{Comput. Optimization. Appl.}, vol.~24, no. 2-3, pp.
  187--206, 2003.

\bibitem{HeiRom09}
H.~Heitsch and W.~R\"{o}misch, ``\BIBforeignlanguage{English}{Scenario tree
  modeling for multistage stochastic programs},''
  \emph{\BIBforeignlanguage{English}{Math. Prog.}}, vol. 118, no.~2, pp.
  371--406, 2009.

\bibitem{HSBP2017}
P.~Sopasakis, D.~Herceg, A.~Bemporad, and P.~Patrinos, ``Risk-averse model
  predictive control,'' \emph{Automatica}, vol. 100, pp. 281--288, 2019.

\bibitem{CP2014}
Y.-L. Chow and M.~Pavone, ``A framework for time-consistent, risk-averse model
  predictive control: Theory and algorithms,'' in \emph{ACC}, 2014, pp. 4204 --
  4211.

\bibitem{SDR2014}
A.~Shapiro, D.~Dentcheva, and A.~Ruszczy\'{n}ski, \emph{Lectures on stochastic
  programming: modeling and theory}, 2nd~ed.\hskip 1em plus 0.5em minus
  0.4em\relax Philadelphia, PA, USA: SIAM, 2014.

\bibitem{Asamov2015}
T.~Asamov and A.~Ruszczy{\'{n}}ski, ``Time-consistent approximations of
  risk-averse multistage stochastic optimization problems,'' \emph{Math.
  Prog.}, vol. 153, no.~2, pp. 459--493, 2015.

\bibitem{Bruno2016979}
S.~Bruno, S.~Ahmed, A.~Shapiro, and S.~Stree, ``Risk neutral and risk averse
  approaches to multistage renewable investment planning under uncertainty,''
  \emph{Eur. J. Oper. Res.}, vol. 250, no.~3, pp. 979--989, 2016.

\bibitem{patrinos2011convex}
P.~Patrinos and H.~Sarimveis, ``Convex parametric piecewise quadratic
  optimization: theory and algorithms,'' \emph{Automatica}, vol.~47, no.~8, pp.
  1770--1777, 2011.

\bibitem{patrinosECC2007}
------, ``An explicit optimal control approach for mean-risk dynamic portfolio
  allocation,'' in \emph{ECC}, 2007, pp. 3364--3370.

\bibitem{KS2016}
R.~Khodabakhsh and S.~Sirouspour, ``Optimal control of energy storage in a
  microgrid by minimizing conditional value-at-risk,'' \emph{IEEE Trans.
  Sustain. Energy}, vol.~7, no.~3, pp. 1264--1273, 2016.

\bibitem{ZJ2018}
C.~Zhao and R.~Jiang, ``Distributionally robust contingency-constrained unit
  commitment,'' \emph{IEEE Trans. Power Syst.}, vol.~33, no.~1, pp. 94--102,
  2018.

\bibitem{GPM2007}
J.~Garc{\'\i}a-Gonz{\'a}lez, E.~Parrilla, and A.~Mateo, ``Risk-averse
  profit-based optimal scheduling of a hydro-chain in the day-ahead electricity
  market,'' \emph{Eur. J. Oper. Res.}, vol. 181, no.~3, pp. 1354--1369, 2007.

\bibitem{MNF+2013}
I.~G. Moghaddam, M.~Nick, F.~Fallahi, M.~Sanei, and S.~Mortazavi, ``Risk-averse
  profit-based optimal operation strategy of a combined wind farm--cascade
  hydro system in an electricity market,'' \emph{Renewable Energy}, vol.~55,
  pp. 252--259, 2013.

\bibitem{ZSM2017}
Y.~Zhang, S.~Shen, and J.~L. Mathieu, ``Distributionally robust
  chance-constrained optimal power flow with uncertain renewables and uncertain
  reserves provided by loads,'' \emph{IEEE Trans. Power Syst.}, vol.~32, no.~2,
  pp. 1378--1388, 2017.

\bibitem{SWML2015}
T.~Summers, J.~Warrington, M.~Morari, and J.~Lygeros, ``Stochastic optimal
  power flow based on conditional value at risk and distributional
  robustness,'' \emph{Int. J. Electr. Power Energy Syst. Eng.}, vol.~72, pp.
  116--125, 2015.

\bibitem{Roald201566}
L.~Roald, M.~Vrakopoulou, F.~Oldewurtel, and G.~Andersson, ``Risk-based optimal
  power flow with probabilistic guarantees,'' \emph{Int. J. Electr. Power
  Energy Syst.}, vol.~72, pp. 66--74, 2015.

\bibitem{Maceira2015126}
M.~Maceira, L.~Marzano, D.~Penna, A.~Diniz, and T.~Justino, ``Application of
  {CVaR} risk aversion approach in the expansion and operation planning and for
  setting the spot price in the brazilian hydrothermal interconnected system,''
  \emph{Int. J. Electr. Power Energy Syst.}, vol.~72, pp. 126--135, 2015.

\bibitem{MCP2010}
J.~M. Morales, A.~J. Conejo, and J.~P{\'e}rez-Ruiz, ``Short-term trading for a
  wind power producer,'' \emph{IEEE Trans. Power Syst.}, vol.~25, no.~1, pp.
  554--564, 2010.

\bibitem{PTB2011}
P.~Patrinos, S.~Trimboli, and A.~Bemporad, ``Stochastic {MPC} for real-time
  market-based optimal power dispatch,'' in \emph{IEEE CDC and ECC}, 2011, pp.
  7111--7116.

\bibitem{BZW+2012}
A.~Botterud, Z.~Zhou, J.~Wang, R.~J. Bessa, H.~Keko, J.~Sumaili, and
  V.~Miranda, ``Wind power trading under uncertainty in {LMP} markets,''
  \emph{IEEE Trans. Power Syst.}, vol.~27, no.~2, pp. 894--903, 2012.

\bibitem{KHS+2017}
A.~Krishna, C.~A. Hans, J.~Schiffer, J.~Raisch, and T.~Kral, ``Steady state
  evaluation of distributed secondary frequency control strategies for
  microgrids in the presence of clock drifts,'' in \emph{MED}, 2017.

\bibitem{SHK+2017}
J.~Schiffer, C.~A. Hans, T.~Kral, R.~Ortega, and J.~Raisch, ``Modelling,
  analysis and experimental validation of clock drift effects in low-inertia
  power systems,'' \emph{IEEE Trans. Ind. Electron.}, vol.~64, no.~7, pp.
  5942--5951, 2017.

\bibitem{PMVDB2005}
K.~Purchala, L.~Meeus, D.~Van~Dommelen, and R.~Belmans, ``Usefulness of {DC}
  power flow for active power flow analysis,'' in \emph{IEEE PES GM}, 2005, pp.
  454--459.

\bibitem{BM1999}
A.~Bemporad and M.~Morari, ``Control of systems integrating logic, dynamics,
  and constraints,'' \emph{Automatica}, vol.~35, no.~3, pp. 407--427, 1999.

\bibitem{rawlings2009model}
J.~B. Rawlings and D.~Q. Mayne, \emph{Model predictive control: Theory and
  design}.\hskip 1em plus 0.5em minus 0.4em\relax Nob Hill Pub., 2009.

\bibitem{HBR+2018}
C.~A. Hans, P.~Braun, J.~Raisch, L.~Grüne, and C.~Reincke-Collon,
  ``Hierarchical distributed model predictive control of interconnected
  microgrids,'' \emph{IEEE Trans. Sustain. Energy}, vol.~10, no.~1, p.
  407–416, 2019.

\bibitem{KBL1994}
P.~Kundur, N.~J. Balu, and M.~G. Lauby, \emph{Power system stability and
  control}.\hskip 1em plus 0.5em minus 0.4em\relax McGraw-Hill, 1994, vol.~7.

\bibitem{Low2014}
S.~H. Low, ``Convex relaxation of optimal power flow---part {I}: Formulations
  and equivalence,'' \emph{IEEE Trans. Control Netw. Syst.}, vol.~1, no.~1, pp.
  15--27, 2014.

\bibitem{BJR2013}
G.~E. Box, G.~M. Jenkins, and G.~C. Reinsel, \emph{Time series analysis:
  forecasting and control}.\hskip 1em plus 0.5em minus 0.4em\relax John Wiley
  \& Sons, 2013.

\bibitem{Pflug2015scen}
G.~C. Pflug and A.~Pichler, ``Dynamic generation of scenario trees,''
  \emph{Comput. Optimization Appl.}, vol.~62, no.~3, pp. 641--668, 2015.

\bibitem{Latorre20071339}
J.~M. Latorre, S.~Cerisola, and A.~Ramos, ``Clustering algorithms for scenario
  tree generation: Application to natural hydro inflows,'' \emph{Eur. J. Oper.
  Res.}, vol. 181, no.~3, pp. 1339--1353, 2007.

\bibitem{Chen:2014:KST:2787349.2787351}
Z.~Chen and D.~Xu, ``Knowledge-based scenario tree generation methods and
  application in multiperiod portfolio selection problem,'' \emph{Appl. Stoch.
  Model. Bus. Ind.}, vol.~30, no.~3, pp. 240--257, 2014.

\bibitem{Beraldi20102322}
P.~Beraldi, F.~D. Simone, and A.~Violi, ``Generating scenario trees: A parallel
  integrated simulation-optimization approach,'' \emph{J. Comput. Appl. Math.},
  vol. 233, no.~9, pp. 2322--2331, 2010.

\bibitem{Gulpinar20041291}
N.~G{\"u}lpınar, B.~Rustem, and R.~Settergren, ``Simulation and optimization
  approaches to scenario tree generation,'' \emph{J. Econ. Dyn. Control},
  vol.~28, no.~7, pp. 1291--1315, 2004.

\bibitem{OPG+2015}
C.~Ocampo-Martinez, V.~Puig, J.~Grosso, G.~Cembrano, P.~Sopasakis,
  A.~Sampathirao, P.~Patrinos, D.~Bernardini, G.~Gnecco, and A.~Bemporad,
  ``Real-time control algorithms for optimal operational management,'' EFFINET,
  Tech. Rep., 2015.

\bibitem{JMK2012}
M.~{\v{Z}}ivi{\'c}~{\DJ}urovi{\'c}, A.~Mila{\v{c}}i{\'c}, and M.~Kr{\v{s}}ulja,
  ``A simplified model of quadratic cost function for thermal generators,''
  \emph{Ann. DAAAM 2012 Proc. 23 Int. DAAAM Symp.}, vol.~23, no.~1, pp. 25--28,
  2012.

\bibitem{VNW+2005}
J.~Vetter, P.~Nov{\'a}k, M.~R. Wagner, C.~Veit, K.-C. M{\"o}ller, J.~Besenhard,
  M.~Winter, M.~Wohlfahrt-Mehrens, C.~Vogler, and A.~Hammouche, ``Ageing
  mechanisms in lithium-ion batteries,'' \emph{J. Power Sources}, vol. 147, no.
  1-2, pp. 269--281, 2005.

\bibitem{van2016distributionally}
B.~P. Van~Parys, D.~Kuhn, P.~J. Goulart, and M.~Morari, ``Distributionally
  robust control of constrained stochastic systems,'' \emph{IEEE Trans. Autom.
  Control}, vol.~61, no.~2, pp. 430--442, 2016.

\bibitem{BV2004}
S.~Boyd and L.~Vandenberghe, \emph{Convex optimization}.\hskip 1em plus 0.5em
  minus 0.4em\relax Cambridge university press, 2004.

\bibitem{Shapiro2012}
A.~Shapiro, ``Minimax and risk averse multistage stochastic programming,''
  \emph{Eur. J. Oper. Res.}, vol. 219, no.~3, pp. 719--726, 2012.

\bibitem{Lof2004}
J.~L\"ofberg, ``{YALMIP}: A toolbox for modeling and optimization in
  {MATLAB},'' in \emph{IEEE CACSD}, 2004, pp. 284--289.

\bibitem{pbraun2016islanding}
P.~Braun, T.~Faulwasser, L.~Gr{\"u}ne, C.~M. Kellett, S.~R. Weller, and
  K.~Worthmann, ``Maximal islanding time for microgrids via distributed
  predictive control,'' in \emph{22 Int. Symp. Math. Theory Netw. Syst.}, 2016,
  pp. 652--659.

\bibitem{SSBP2017}
A.~K. Sampathirao, P.~Sopasakis, A.~Bemporad, and P.~Patrinos,
  ``{GPU}-accelerated stochastic predictive control of drinking water
  networks,'' \emph{IEEE Trans. Control Syst. Technol.}, vol.~26, no.~2, pp.
  551--562, 2018.

\bibitem{SSP2019}
M.~Schuurmans, P.~Sopasakis, and P.~Patrinos, ``Safe learning-based control of
  stochastic jump linear systems: a distributionally robust approach,''
  \emph{arXiv e-print, 1903.10040}, 2019.

\end{thebibliography}

\begin{IEEEbiography}[{\includegraphics[width=1in,height=1.25in,clip,keepaspectratio]{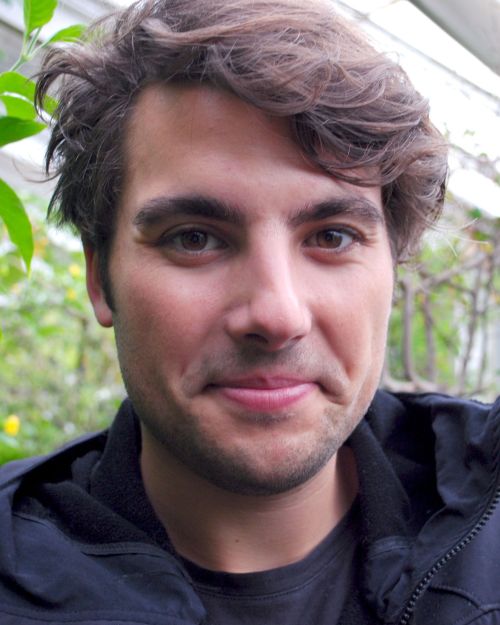}}]{Christian A. Hans} (S’13) is research assistant at the Control Systems Group in the EECS Department at TU Berlin.
Prior to that, he engaged in control issues in low inertia microgrids as development engineer at Younicos AG, Germany.
His research focuses on the application of control methods to power systems with very high share of renewable energy sources.
Here, he is especially interested in
model predictive operation control and time-series forecasting
as well as
distributed and decentralized control.
\end{IEEEbiography}

\begin{IEEEbiography}[{\includegraphics[width=1in,height=1.25in,clip,keepaspectratio]{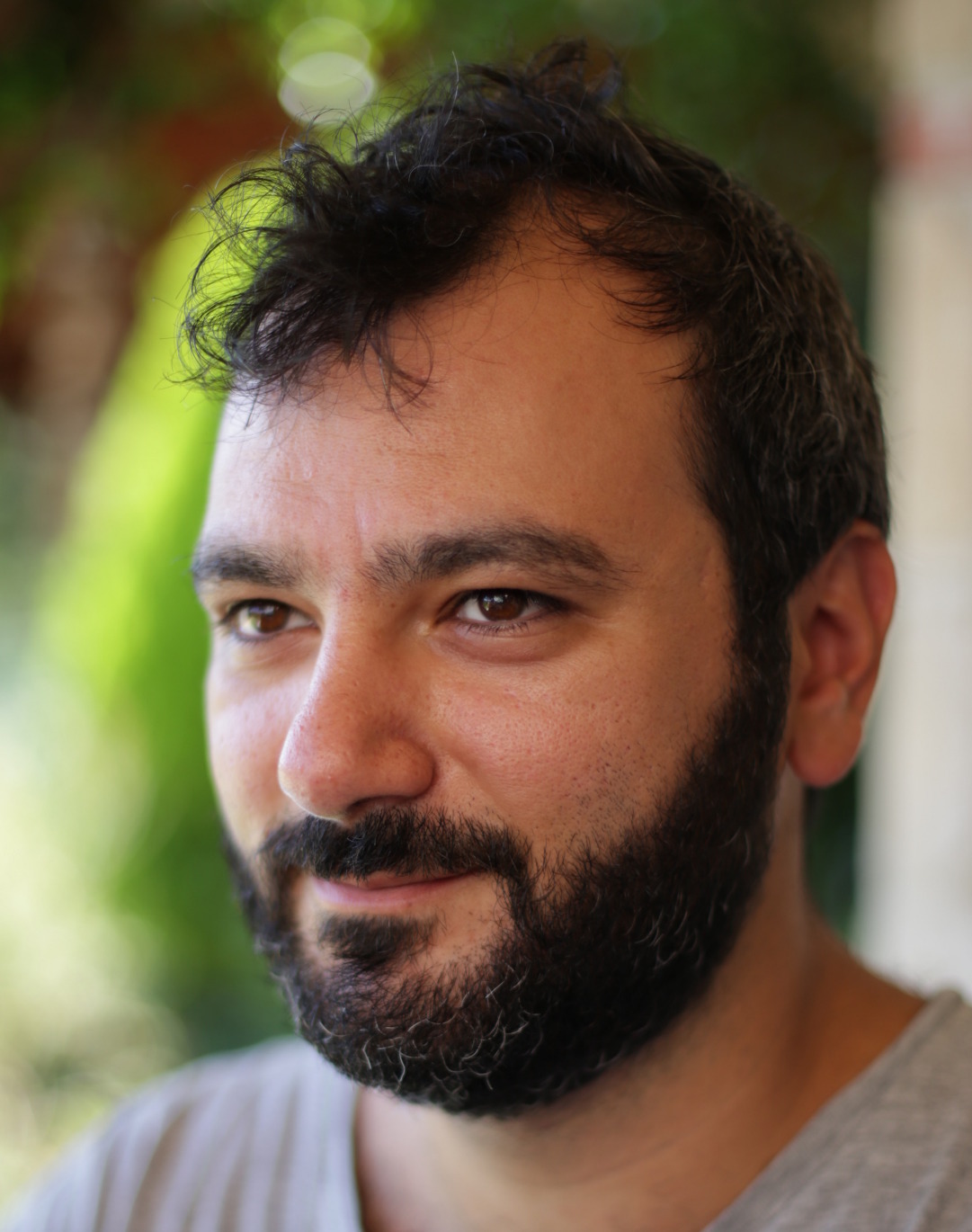}}]{Pantelis Sopasakis} is a lecturer at School of Electronics, Electrical Engineering and Computer Science (EEECS) at Queen's University Belfast and a member of Centre for Intelligent Autonomous Manufacturing Systems (i-AMS).
He received a diploma (MEng) in Chemical Engineering in 2007 and an MSc with honours in Applied Mathematics in 2009 from National Technical University of Athens.
In December 2012, he defended his PhD thesis at School of Chemical Engineering, NTU Athens.
He has held postdoc positions at IMT Lucca (Italy, 2013-16), KU Leuven, ESAT (Belgium, 2016-18) and University of Cyprus, KIOS (Cyprus, 2018).
His research interests focus on the development of model predictive control methodologies, numerical algorithms for large-scale stochastic systems and massive parallelisation on general-purpose graphics processing units (GP-GPUs).
The outcomes of his research have several applications in automotive and aerospace, microgrids, water distribution networks and optimal, safe drug administration.
\end{IEEEbiography}

\begin{IEEEbiography}[{\includegraphics[width=1in,height=1.25in,clip,keepaspectratio]{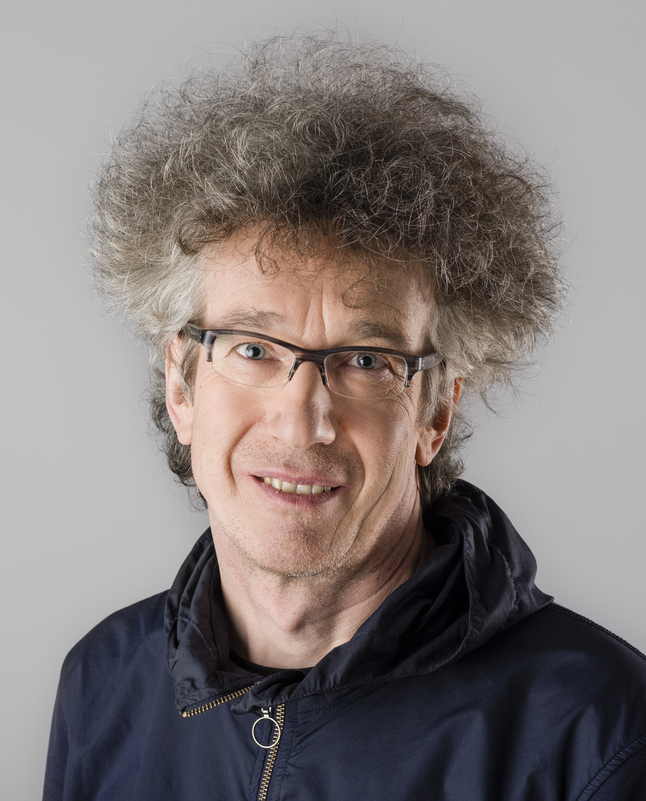}}]{J\"org Raisch} studied Engineering Cybernetics at Stuttgart University and Control Systems at UMIST, Manchester.
He received a PhD and a Habilitation degree, both from Stuttgart University.
He holds the chair for Control Systems in the EECS Department at TU Berlin, and he is also an external scientific member of the Max Planck Institute for Dynamics of Complex Technical Systems.
His main research interests are hybrid and hierarchical control, distributed cooperative control, and control of timed discrete-event systems in tropical algebras, with applications in chemical, medical, and power systems engineering.
\end{IEEEbiography}

\begin{IEEEbiography}[{\includegraphics[width=1in,height=1.25in,clip,keepaspectratio]{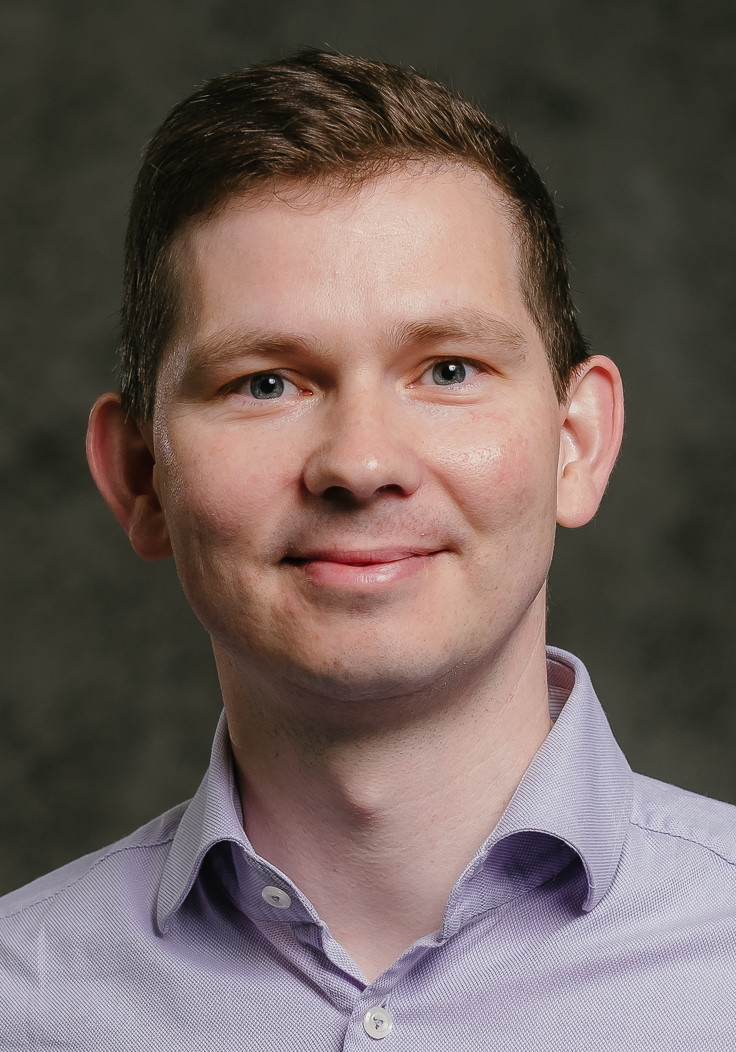}}]{Carsten Reincke-Collon} is Director of Future Technology within the Global Products and Technology unit of Aggreko plc, a leading specialist provider of power, temperature control and energy services.
He studied Electrical Engineering at TU Dresden, Germany, and received his Engineering Diploma degree in 2005.
As external doctoral candidate at the Laboratory of Control Theory of TU Dresden his research focused on invariant feedback design for nonlinear (finite-dimensional) control systems possessing Lie symmetries employing a differential-geometric approach to symmetries of nonlinear systems.
He received his doctorate degree (summa cum laude) in 2011 and joined Younicos AG – a Berlin-based pioneer in battery storage systems and hybrids – serving in leading engineering roles and as member of the senior management team before the acquisition by Aggreko in 2017.
Among his current research interests are solutions for optimal control of high-penetration microgrids.
\end{IEEEbiography}

\begin{IEEEbiography}[{\includegraphics[width=1in,height=1.25in,clip,keepaspectratio]{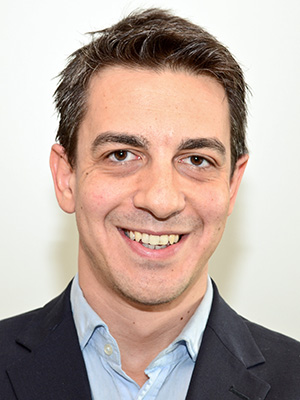}}]{Panagiotis (Panos) Patrinos} (M’13) is assistant professor at the Department of Electrical Engineering (ESAT) of KU Leuven, Belgium. During 2014 he held a visiting assistant professor position in the department of Electrical Engineering at Stanford University. He received his PhD in Control and Optimisation, M.S. in Applied Mathematics and M.Eng. from the National Technical University of Athens in 2010, 2005 and 2003, respectively. He has held postdoc positions at the University of Trento and IMT Lucca, Italy, where he became an assistant professor in 2012.

His research interests lie in the interface of optimisation, learning and control with a broad range of applications including autonomous vehicles, smart grids, water networks, aerospace, multi-agent systems, signal processing and machine learning.

He was the chair of the 38th Benelux meeting on Systems and Control (2019) and co-chair of the 4th European Conference on Computational Optimization (EUCCO 2016).
\end{IEEEbiography}

\end{document}